\newtheorem{theorem}{Theorem}
\newtheorem{proposition}[theorem]{Proposition}
\newtheorem{corollary}[theorem]{Corollary}
\newtheorem{lemma}[theorem]{Lemma}
\theoremstyle{definition}
\newtheorem{remark}[theorem]{Remark}
\newcommand{\CC}{{\mathbb C}}
\newcommand{\NN}{{\mathbb N}}
\newcommand{\RR}{{\mathbb R}}
\newcommand{\ZZ}{{\mathbb Z}}
\newcommand{\calA}{{\mathcal A}}
\newcommand{\calI}{{\mathcal I}}
\newcommand{\calJ}{{\mathcal J}}
\newcommand{\calK}{{\mathcal K}}
\newcommand{\calL}{{\mathcal L}}
\newcommand{\calM}{{\mathcal M}}
\newcommand{\calS}{{\mathcal S}}
\newcommand{\calV}{{\mathcal V}}
\DeclareMathOperator{\conv}{{\rm conv}}
\DeclareMathOperator{\EDD}{{\rm EDD}}
\DeclareMathOperator{\MV}{{\rm MV}}
\DeclareMathOperator{\Vol}{{\rm Vol}}
\newcommand{\be}{{\bf e}}
\DeclareMathOperator{\Pyr}{{\rm Pyr}}
\newcommand{\defcolor}[1]{{\color{RoyalBlue}#1}}
\definecolor{OurRed}{rgb}{0.64, 0.30, 0.30}
\newcommand{\demph}[1]{\defcolor{{\sl #1}}}
\title{Euclidean Distance Degree and mixed volume}
\author{P.~Breiding}
\address{Paul Breiding, Max Planck Institute for Mathematics in the Sciences, Inselstra{\ss}e 22, 04103 Leipzig, Germany}
\email{paul.breiding@mis.mpg.de }
\urladdr{http://www.paulbreiding.org}
\author{F.~Sottile}
\address{Frank Sottile, Department of Mathematics,
         Texas A\&M University, College Station, Texas 77843,  USA}
\email{sottile@math.tamu.edu}
\urladdr{http://www.math.tamu.edu/\~{}sottile}
\author{J.~Woodcock}
\address{James Woodcock, Department of Mathematics,
         Texas A\&M University, College Station, Texas 77843,  USA}
\email{jdubbs11@tamu.edu}
\thanks{P.~Breiding funded by the Deutsche Forschungsgemeinschaft (DFG, German Research Foundation) -- Projektnummer 445466444; and funded by the European Research Council
(ERC) under the European Union's Horizon 2020 research and innovation programme (grant agreement No 787840).}
\thanks{Research of Sottile supported by Simons Collaboration Grant for Mathematicians 636314}
\subjclass[2010]{14M25,90C26}
\keywords{Euclidean Distance Degree, Newton Polytopes, Nonlinear Algebra}
\begin{document}

\begin{abstract}
  We initiate a study of the Euclidean Distance Degree in the context of sparse polynomials.
  Specifically, we consider a hypersurface $f=0$ defined by a polynomial~$f$ that is general given its
  support, such that the support contains the origin.
  We show that the Euclidean Distance Degree of $f=0$ equals the mixed volume
  of the Newton polytopes of the associated Lagrange multiplier equations.
We discuss the implication of our result for computational complexity and give a formula for the Euclidean Distance Degree when the Newton
polytope is a rectangular parallelepiped.
\end{abstract}
\maketitle

\section{Introduction}
Let $X\subset \RR^n$ be a real algebraic variety.
For a point $u\in\RR^n\smallsetminus X$,  consider the following  problem:
 \begin{equation}\label{critical_point_problem}
  \text{compute the critical points of }
  d_X\colon X\to \RR, \; x\mapsto \Vert u-x\Vert,
 \end{equation}
where $\|u-x\| = \sqrt{(u-x)^T(u-x)}$ is the Euclidean distance on $\RR^n$.

Seidenberg~\cite{Seidenberg1954}  observed that if $X$ is nonempty, then it contains a solution
to~\eqref{critical_point_problem}. He used this observation in an algorithm for deciding if $X$ is empty.
Hauenstein  \cite{Hauenstein2013} pointed out that solving \eqref{critical_point_problem} provides a point on each connected component of
$X$.
So the solutions to~\eqref{critical_point_problem} are also useful  of $X$
{in learning the number and position of the connected components} of $X$.
From the point of view of optimization, the problem~\eqref{critical_point_problem} is a relaxation of the
optimization problem of finding a point $x\in X$ that minimizes the Euclidean distance to $u$.
A prominent example of this is low-rank matrix approximation, which can be solved by computing the singular value
decomposition.
In general, computing the critical points of the Euclidean distance between~$X$ and $u$ is a difficult task in nonlinear
algebra.

We consider the problem~\eqref{critical_point_problem} when $X\subset \RR^n$ is a \demph{real algebraic hypersurface} in $\RR^n$ defined by
a single real polynomial{,}
\[
 X = \defcolor{\calV_{\RR}(f)} := \{x\in \RR^n \mid f(x)=0\}, \text{ where } f(x) = f(x_1,\ldots,x_n) \in \RR[x_1,\ldots,x_n].
\]
The critical points of the distance function~$d_X$ from~\eqref{critical_point_problem} are called \demph{ED-critical points}.
They can be found by solving the associated
Lagrange multiplier equations. This is a system of polynomial equations defined as follows.

Let us write $\defcolor{\partial_i}$ for the operator of partial differentiation with respect to the variable $x_i$, so that
$\partial_i f:= \frac{\partial f}{\partial x_i}$, and also write
$\defcolor{\nabla f(x)} = (\partial_1 f(x),\ldots, \partial _n f(x))$ for the vector of partial derivatives of $f$ (its \demph{gradient}).
The \demph{Lagrange multiplier equations} are the following system of $n{+}1$ polynomial equations in the $n{+}1$ variables
$(\lambda,x_1,\ldots,x_n)$.
\begin{equation}\label{Lagrange_multiplier_equation}
  \defcolor{\calL_{f,u}(\lambda,x)}\ :=\
     \begin{bmatrix} f(x) \\ \nabla f(x) - \lambda (u-x)\end{bmatrix}\ =\ 0\,,
\end{equation}
where $\lambda$ is an auxiliary variable (the Lagrange multiplier).

We consider the \demph{number of complex solutions} to $\calL_{f,u}(\lambda,x)=0$.
For general $u$, this number is called the \demph{Euclidean Distance Degree} (EDD) \cite{DHOST2016}
of the hypersurface $f=0$:
\begin{equation}\label{EDD_def}
\EDD(f)\ :=\ \text{ number of solutions to } \calL_{f,u}(\lambda,x)=0 \text{ in $\mathbb C^{n+1}$ for general $u$.}
\end{equation}
Here, ``general'' means for all $u$ in the complement of a proper algebraic subvariety of $\RR^n$.
In the following, when referring to $\EDD(f)$ we will simply speak of the EDD of $f$.

Figure~\ref{biquadratic} shows the solutions to $\calL_{f,u}(\lambda,x)=0$ for a biquadratic polynomial $f$.
\begin{figure}[htb]
\includegraphics[height=120pt]{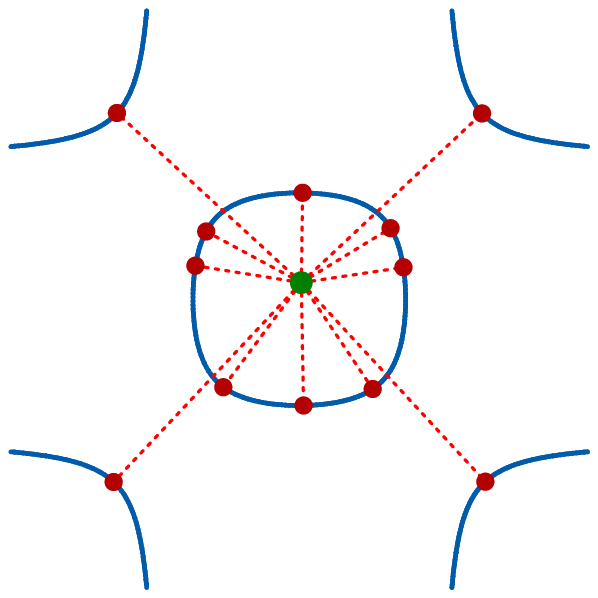}
\caption{The curve $X=\calV_{\RR}(x_1^2 x_2^2 - 3x_1^2 - 3x_2^2+5)\subset \RR^2$ is in blue and $u=(0.025, 0.2)$
is in green.
The 12 red points are the critical points of the distance function $d_X$; that is, they are the $x$-values of the solutions to
  $\calL_{f,u}(\lambda,x)=0$.
  In this example, the Euclidean Distance Degree {of $X$} is 12, so all complex solutions are in fact real.}
\label{biquadratic}
\end{figure}

Determining the Euclidean Distance Degree is of interest in applied algebraic geometry, but also in related areas,
because, as we will discuss in
Section \ref{sec:complexity}, our results on the EDD of~$f$ have implications for the computational complexity of solving the
problem~\eqref{critical_point_problem}.

There is a subtle point about $\EDD(f)$. The definition in~\eqref{EDD_def} does not need us to assume that $\calV_{\RR}(f)$ is
a hypersurface in $\mathbb R^n$.
In fact, $\calV_{\RR}(f)$ can even be empty.
Rather, $\EDD(f)$ is a property of the \demph{complex hypersurface} $\defcolor{X_\CC:=\calV_\CC(f)}$.
We will therefore drop the assumption of $\calV_{\RR}(f)$ being a real hypersurface in the following.
Nevertheless, the reader should keep in mind that for the applications discussed at the beginning of this paper the assumption is needed.
We will come back to those applications only in Sections \ref{sec:real_points} and \ref{certification_discussion}.

In the foundational paper~\cite{DHOST2016}, the Euclidean Distance Degree of $f$ was related to the polar classes of $X_\CC$, and
there are other formulas involving characteristic classes~\cite{AH18} {or Euler characteristic~\cite{MRW}} of~$X_\CC$.
In this paper we give a new formula for the {Euclidean Distance Degree $\EDD(f)$}.

Our main result is Theorem \ref{Th:EDD=MV} in the next section.
We show that, if $f$ is sufficiently general given its support $\mathcal A$ with $0\in\mathcal A$, then $\EDD(f)$ is equal to
the \demph{mixed volume} of the Newton polytopes of~$\calL_{f,u}(\lambda,x)$.
This opens new paths {to compute Euclidean Distance Degree} using tools from convex geometry.
We demonstrate this in Section~\ref{sec:rectangular_parallelepiped} and compute the EDD of a general hypersurface whose Newton polytope is a
rectangular parallelepiped.
{We think it is an interesting problem to relate our mixed volume formula to other formulas involving topological invariants.}

Our proof strategy relies on \demph{Bernstein's Other Theorem} (Proposition \ref{P:Bernstein}) below.
This result gives an effective method for proving that the number of solutions to a system of polynomial equations can be expressed as a
mixed volume.
We hope our work sparks a new line of research that exploits this approach in other applications, not just EDD.

%
\section{Statement of main results}

We give a new formula for the Euclidean Distance Degree that takes into account the monomials in $f$.
In Section~\ref{sec:rectangular_parallelepiped} we work this out in the special case when this Newton polytope is a rectangular
parallelepiped.

Before stating our main results we have to introduce notation:
A vector $a=(a_1,\dotsc,a_n)$  of nonnegative integers is the exponent of a monomial $\defcolor{x^a}:= x_1^{a_1}\cdots x_n^{a_n}$,
and a polynomial $f\in\mathbb C[x_1,\dotsc,x_n]$ is a linear combination of monomials.
The set \defcolor{$\calA$} of exponents of monomials that appear in $f$ is its \demph{support}.
The \demph{Newton polytope} of $f$ is the convex hull of its support.
Given polytopes $Q_1,\dotsc,Q_m$ in $\RR^m$, we write \defcolor{$\MV(Q_1,\dotsc,Q_m)$} for their mixed volume.
This was defined by Minkowski; its definition and properties are explained in~\cite[Sect.~IV.3]{Ewald}, and we revisit them in
Section~\ref{sec:rectangular_parallelepiped}.
Our main result expresses the $\EDD(f)$ in terms of mixed volume.

We denote by \defcolor{$P,P_1,\ldots,P_n$} the Newton polytopes of the Lagrange multiplier equations
$\calL_{f,u}(\lambda,x)$ from \eqref{Lagrange_multiplier_equation}.
That is, $P$ is the Newton polytope of $f$, and $P_i$ is the Newton polytope of~$\partial_i f-\lambda(u_i-x_i)$.
Observe that $P,P_1,\dotsc,P_n$ are polytopes in $\RR^{n+1}$, because $\calL_{f,u}(\lambda,x)$ has~$n+1$ variables
$\lambda,x_1,\ldots,x_n$.

We state our first main result. The proof is given in Section~\ref{S:proof}.

\begin{theorem}\label{Th:EDD=MV}
  If $f$ is a polynomial whose support $\calA$ contains $0$, then
  \[
  \EDD(f)\   \leq\  \MV(P,P_1,\dotsc,P_n)\,,
  \]
  where $P$ is the Newton polytope of $f$ and $P_i$ is the Newton polytope of $\partial_i f-\lambda(u_i-x_i)$ for~$1\leq i\leq n$.
  There is a dense open subset $U$ of polynomials with support $\calA$ such that when $f\in U$ this inequality is an equality
  {and for $u\in\CC^n$ general, all solutions to $\calL_{f,u}$ occur without multiplicity.}
\end{theorem}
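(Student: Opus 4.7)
The plan is to apply Bernstein's Other Theorem (Proposition~\ref{P:Bernstein}) to the Lagrange multiplier system $\calL_{f,u}$, regarded as $n{+}1$ polynomials in the $n{+}1$ variables $(\lambda, x_1, \ldots, x_n)$ with Newton polytopes $P, P_1, \ldots, P_n$. This furnishes both the mixed-volume upper bound for isolated zeros in the torus $(\CC^*)^{n+1}$ and a sharp criterion---namely, inconsistency of every facial subsystem $\calL^\omega_{f,u}$ in that torus---under which the bound is attained and every solution is simple. The proof then splits into two pieces: showing that for generic $(f,u)$ every ED-critical point lies in $(\CC^*)^{n+1}$, so that $\EDD(f)$ coincides with the torus count; and verifying the facial condition for $f$ in a dense open subset of those with support $\calA$.

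For the first piece, I would rule out boundary strata one coordinate at a time. A solution with $\lambda=0$ is a singular point of $\calV_\CC(f)$, which does not exist for generic $f$ by a Bertini-type argument that crucially uses that $0\in\calA$ guarantees a constant term in $f$. A solution with $x_i=0$ satisfies an overdetermined system of $n{+}1$ equations in $n$ unknowns, which is inconsistent for generic $(f,u)$ by a dimension count on the incidence variety $\{(f,u,\lambda,x):\calL_{f,u}(\lambda,x)=0,\;x_i=0\}$. Upper semi-continuity of the isolated solution count then propagates $\EDD(f)\leq\MV(P,P_1,\ldots,P_n)$ from generic $f$ to every $f$ with $0\in\calA$. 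For the second piece, each nonzero weight $\omega=(\omega_0,\omega_1,\ldots,\omega_n)\in\RR^{n+1}$ determines the initial form of $\partial_i f - \lambda(u_i-x_i)$ according to which of $M(\omega')-\omega_i$, $\omega_0$, and $\omega_0+\omega_i$ is largest, where $\omega'=(\omega_1,\ldots,\omega_n)$ and $M(\omega')=\max_{a\in\calA}\omega'\cdot a$. Case analysis on the sign and dominance of $\omega_0$ then proceeds: when $\omega_0$ strictly dominates positively, the initial equations reduce to $\lambda(x_i-u_i)=0$ or monomials, forcing $x=u$ which contradicts $f(u)\neq 0$ for generic $u$; when $\omega_0$ strictly dominates negatively, the system becomes $f^{\omega'}=\nabla f^{\omega'}=0$, selecting singular points of the initial toric hypersurface, which are empty for generic $f$; the intermediate cases reduce to genericity of the face polynomials of $f$.

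The main obstacle is the case analysis in the second piece. Each $P_i$ has a hybrid structure---the shifted Newton polytope of $\partial_i f$ sitting in the $\{\lambda=0\}$-hyperplane together with the two Lagrange vertices $e_0$ and $e_0+e_i$---so the face of $P_i$ selected by $\omega$ can be any nonempty subset of these contributors, and the corresponding initial polynomial any truncation of $\partial_i f$ supplemented by $\lambda$-linear terms. Matching these faces across $i=1,\ldots,n$ and ruling out a common torus zero requires careful bookkeeping; I expect the hypothesis $0\in\calA$ to enter here by pinning down the behavior at the boundary weights where the $\lambda$-terms balance against face monomials of $\partial_i f$, keeping the facial system generically inconsistent at every such face.
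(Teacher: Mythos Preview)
Your overall strategy is exactly the paper's: apply Bernstein's Other Theorem (Proposition~\ref{P:Bernstein}) to $\calL_{f,u}$, argue that for generic $(f,u)$ all solutions lie in $(\CC^\times)^{n+1}$, and verify that every nontrivial facial system is inconsistent there. Your first piece is fine and parallels the paper's incidence-variety argument in Section~\ref{S:proof}.

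The genuine gap is in the facial analysis. Your trichotomy on whether $\omega_0$ ``dominates positively, negatively, or is intermediate'' is too coarse: for a single weight $w=(v,w_1,\dotsc,w_n)$ the seven cases of~\eqref{shapes} can occur simultaneously for different indices $i$, so no blanket description like ``the system becomes $f_w=\nabla f_w=0$'' is available (and even in that subcase one must first check $(\partial_i f)_w=\partial_i(f_w)$, which fails whenever $f_w$ is independent of $x_i$). The paper first partitions $\{1,\dotsc,n\}=\calI\sqcup\calJ$ according to whether $\partial_i(f_w)\neq 0$, uses Lemma~\ref{lemma_helpful} to identify $(\partial_i f)_w$ with $\partial_i(f_w)$ on $\calI$, and then splits into three regimes. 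The hardest---your ``intermediate'' case---is when some $i\in\calI$ has $w_i<0$, so that the $i$th facial equation reads $\partial_i f_w+\lambda x_i=0$. Here genericity of the face polynomials alone does \emph{not} suffice, because those polynomials are partial derivatives of a common $f_w$ and hence far from independent. The missing tool is the Euler identity for the quasihomogeneous polynomial $f_w$ (Lemma~\ref{lemma_derivative}): substituting $\partial_k f_w=0$ for $k\in\calK$ and $\partial_m f_w=-\lambda x_m$ for $m\in\calM$ into $h^*\, f_w=\sum_i w_i x_i\,\partial_i f_w$ forces $\sum_{m\in\calM} x_m^2=0$, and one then argues that $\calV(f_w)$ and this isotropic quadric cannot meet non-transversally for $f$ general. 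Your sketch does not anticipate this mechanism, and without it Case~3 of Theorem~\ref{Thm:EmptyFaces} does not close.
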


{The important point of this theorem is that polynomial systems of the form $\calL_{f,u}$ form a proper subvariety of the set of all
  polynomial systems with the same support---its dimension is approximately $\frac{1}{n}$th of the dimension of the ambient space.
We also remark that the assumption $0\in\calA$ is essential to our proof, and it ensures that $\calV(f)$ is smooth at $0$.}

In the following, we refer to polynomials $f\in U$ as \demph{general given the support $\calA$}.

Since $P,P_1,\dotsc,P_n$ are the Newton polytopes of the entries in $\calL_{f,u}$, Bernstein's Theorem~\cite{bernstein}  implies
the inequality in Theorem \ref{Th:EDD=MV} (commonly known as the \demph{BKK bound}; see also~\cite{EK20}).
Our proof of Theorem~\ref{Th:EDD=MV} appeals to a theorem of Bernstein which gives conditions that imply equality in the BKK bound.
These conditions require the  \demph{facial systems} to be empty.

Our next main result is an application of Theorem~\ref{Th:EDD=MV}. We compute
$\EDD(f)$ when the Newton polytope of {$f$} is the rectangular
parallelepiped
\begin{equation}\label{def_box}
\defcolor{B(a)} := [0,a_1]\times\dotsb\times[0,a_n],
\end{equation}
where $\defcolor{a}:=(a_1,\dotsc,a_n)$ is a list of positive integers.
%
%
For each $1\leq k \leq n$, let
\[
\defcolor{e_k(a)}\ :=\ \sum_{1\leq i_1 <\cdots < i_k\leq n} a_{i_1}\cdots a_{i_k}
\]
be the $k$-th elementary symmetric polynomial in $n$
variables evaluated at $a$. The next theorem is our second main result.

\begin{theorem}
\label{Th:EDD_of_box}
Let $a=(a_1,\dotsc,a_n)$. If $f\in\RR[x_1,\dotsc,x_n]$ has Newton polytope $B(a)$, then
  \[
  \EDD(f)\ \leq\ \sum_{k=1}^n k!\, e_k(a)\,.
  \]
There is a dense open subset $U$ of the space of polynomials with Newton polytope $B(a)$  such that for~$f\in U$, this inequality is an equality.
\end{theorem}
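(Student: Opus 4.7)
The plan is to apply Theorem \ref{Th:EDD=MV} and then compute the resulting mixed volume. Since $0 \in B(a)$, Theorem \ref{Th:EDD=MV} reduces the claim to proving
\[
\MV(P, P_1, \ldots, P_n)\ =\ \sum_{k=1}^n k!\, e_k(a),
\]
where $P = B(a)$ sits in $\RR^{n+1}$ at $\lambda = 0$ and $P_i$ is the Newton polytope of $\partial_i f - \lambda(u_i - x_i)$.

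First I would identify each $P_i$ explicitly. For generic $f$ supported on $B(a) \cap \ZZ^n$, the polynomial $\partial_i f$ has Newton polytope $B(a - e_i) = [0,a_1] \times \cdots \times [0, a_i - 1] \times \cdots \times [0, a_n]$; adjoining the monomials $-u_i\lambda$ and $\lambda x_i$ then gives
\[
P_i\ =\ \conv\bigl(B(a - e_i)\ \cup\ \{e_{n+1},\ e_i + e_{n+1}\}\bigr),
\]
where $e_{n+1}$ denotes the $\lambda$-direction basis vector. Geometrically, $P_i$ is a ``frustum'' with base $B(a - e_i)$ at $\lambda = 0$ and top the unit segment $e_{n+1} + \conv\{0, e_i\}$ at $\lambda = 1$; its slice at $\lambda = t$ projects to the axis-aligned box $(1-t)B(a-e_i) + t\,\conv\{0, e_i\}$ in $\RR^n$.

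The main step is the mixed volume computation. Because $P$ lies in $\{\lambda = 0\}$ and each $P_i$-slice is an axis-aligned box, the slice of $t_0 P + \sum_i t_i P_i$ at height $\lambda$ is a union (over partitions $\lambda = \sum_i \lambda_i$ with $\lambda_i \in [0, t_i]$) of axis-aligned boxes whose side lengths are linear in the $t_i$. By Cavalieri's principle, the $(n+1)$-dimensional volume of $t_0 P + t_1 P_1 + \cdots + t_n P_n$ is obtained by integrating these slice volumes, giving a polynomial in $(t_0, \ldots, t_n)$; the mixed volume is its coefficient of $t_0 t_1 \cdots t_n$. I expect this coefficient to decompose as a sum over non-empty subsets $K \subseteq [n]$, where $K$ records which $P_i$ contribute through the top segment (at $\lambda = 1$) versus through the bottom box (at $\lambda = 0$); the $K$-summand should factor as $|K|!\prod_{i \in K} a_i$, the $|K|!$ coming from the volume of a simplex arising in the $\lambda$-integration and $\prod_{i \in K} a_i$ from box geometry in the $x$-directions. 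Summing over non-empty $K$ yields $\sum_{k=1}^n k!\,e_k(a)$.

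The main obstacle is the combinatorial bookkeeping in this step: setting up the integration so that the subset decomposition is transparent and verifying the claimed factorization of each $K$-contribution. An alternative, possibly cleaner route is a fine mixed subdivision of $P + P_1 + \cdots + P_n$ via a generic lifting of vertices, producing fine mixed cells in bijection with non-empty subsets $K \subseteq [n]$, each contributing a parallelepiped of volume $|K|!\prod_{i \in K} a_i$ to the mixed volume.
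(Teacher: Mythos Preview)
Your reduction via Theorem~\ref{Th:EDD=MV} to the identity $\MV(B(a),P_1,\dotsc,P_n)=\sum_{k=1}^n k!\,e_k(a)$ matches the paper, but from there the paper takes a different route than either of your suggestions. Rather than extracting a volume-polynomial coefficient or building a mixed subdivision, it expands $B(a)=\sum_j[0,a_j\be_j]$ by multiadditivity and applies the interval--projection identity $\MV(Q_1,\dotsc,Q_{m-1},[0,b\,\be_j])=b\,\MV(\pi_j(Q_1),\dotsc,\pi_j(Q_{m-1}))$, reducing to $\sum_j a_j$ times an $n$-dimensional mixed volume in which $\pi_j(P_j)$ has become the pyramid $\Pyr(\pi_j(a))=\conv\bigl(B(\pi_j(a))\cup\{\be_0\}\bigr)$ while $\pi_j(P_i)=P_i(\pi_j(a))$ for $i\neq j$. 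The pyramid mixed volume $\MV(\Pyr(a'),P_1(a'),\dotsc,P_m(a'))=1+E(a')$ is then established \emph{algebraically}: eliminate $\lambda$ from a generic system with those Newton polytopes to obtain polynomials with Newton polytopes $B(a')+[0,\be_i]$, compute \emph{their} mixed volume by another round of multiadditivity (this is where $\sum_{K}|K|!\prod_{i\in K}a'_i$ finally appears), and invoke Bernstein's Other Theorem once more to certify that the solution count equals that mixed volume. A symmetric-function identity reassembles the pieces into $E(a)$. What this buys over your approaches is that the combinatorics is absorbed into a solution count and standard mixed-volume identities rather than an explicit cell or volume computation.

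Your Cavalieri sketch is plausible but needs one correction before it can be carried out: the $\lambda$-slice of $t_0P+\sum_i t_iP_i$ is a single convex body, not a disjoint union of boxes over partitions $\lambda=\sum_i\lambda_i$, so its $n$-volume is not a sum over such partitions. It is the box whose $j$th side length is the supremum, over the simplex $\{\lambda_i\in[0,t_i],\ \sum\lambda_i=\lambda\}$, of the $j$th side length of the corresponding box; the resulting integrand is piecewise polynomial in the $t_i$ in a way that makes your subset decomposition less immediate than stated. Your mixed-subdivision alternative is the cleaner of your two routes for realizing the $K$-indexed cells directly, and would give essentially the same subset expansion the paper obtains after eliminating $\lambda$.
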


{T}here is a conceptual change when passing from Theorem \ref{Th:EDD=MV} to Theorem \ref{Th:EDD_of_box}.
Theorem~\ref{Th:EDD=MV} is formulated in terms of the support of $f$, whereas Theorem \ref{Th:EDD_of_box}
concerns its Newton polytope.
This is because the equality in {Theorem}~\ref{Th:EDD_of_box} needs the Newton polytope of the partial derivative
$\partial_i f$ to be $B(a_1,\dotsc,a_i{-}1,\dotsc,a_n)$ {for each $1\leq i\leq n$}.

When $n=2$, a polynomial $f$ with Newton polytope the $2\times 2$ square $B(2,2)$ is a biquadratic, and the bound of
Theorem~\ref{Th:EDD_of_box} becomes $2!\cdot 2\cdot 2 + 1!\cdot (2+2)\ =\ 12\,$,
which was the number of critical points found for the biquadratic curve in Figure~\ref{biquadratic}.

\begin{remark}\label{Woodcock's Formula}
  Observe that for $1\leq i_1<\dotsb<i_k\leq n$, if we project $B(a)$ onto the coordinate subspace indexed by
  $i_1,\dotsc,i_k$, we obtain $B(a_{i_1},\dotsc, a_{i_k})$.
Thus the product $a_{i_1}\dotsb a_{i_k}$ is the $k$-dimensional Euclidean volume of this projection and $k!\,a_{i_1}\dotsb a_{i_k}$ is the \demph{normalized volume} of this projection. On the other hand, $e_k(a)=\sum_{1\leq i_1< \cdots <i_k\leq n}a_{i_1}\dotsb a_{i_k}$. This observation implies an appealing interpretation of the formula of Theorem~\ref{Th:EDD_of_box}: It is the sum of the normalized volumes of all coordinate projections of the
  rectangular parallelepiped $B(a)$.\hfill$\diamond$
\end{remark}
\begin{remark}[Complete Intersections]
Experiments with \texttt{HomotopyContinuation.jl} \cite{BT2018} suggest that a similar formula involving mixed volumes should hold for
general complete intersections.
That is, for
\(
    X=\{x\in\RR^n \mid f_1(x) = \cdots = f_k(x)=0\}
\)
such that $\dim X  = n-k$ and~$f_1,\dotsc,f_k$ are general given their Newton polytopes.
The Lagrange multiplier equations~\eqref{Lagrange_multiplier_equation} become $f_1(x)=\cdots=f_k(x)=0$ and $J\lambda -(u-x)=0$,
where $\lambda = (\lambda_1,\ldots,\lambda_k)$ is now a vector of variables, and $J=(\nabla f_1,\dotsc,\nabla f_k)$ is the
$n\times k$ Jacobian matrix.

We leave this general case of $k>1$ for further research.\hfill$\diamond$
\end{remark}

\subsection{Acknowledgments}
The first and the second author would like to thank the organizers of the Thematic Einstein Semester on
Algebraic Geometry: ``Varieties, Polyhedra, Computation'' in the Berlin Mathematics Research Center MATH+.
This thematic semester included a research retreat where the first and the second author first discussed the relation
between Euclidean Distance Degree and mixed volume, inspired by results in~\cite{DKS}.
The first author would like to thank Sascha Timme for discussing the ideas in Section \ref{certification_discussion}.
\subsection{Outline}
In Section~\ref{sec:complexity} we explain implications of Theorem~\ref{Th:EDD=MV} for computational complexity
in the context of using the \demph{polyhedral homotopy} for solving the Lagrange multiplier equations $\calL_{f,u}=0$
for the problem~\eqref{critical_point_problem}.
In Section~\ref{S:proof}, we explain Bernstein's conditions and give a proof of Theorem~\ref{Th:EDD=MV}.
The proof relies on a lemma asserting
that the facial systems of~$\calL_{f,u}$ are empty. Section~\ref{S:EDD=MV} is devoted to proving this lemma.
The arguments that are used in this proof are explained on an example at the end of Section~\ref{S:proof}.
We conclude in Section~\ref{sec:rectangular_parallelepiped} with a proof of Theorem \ref{Th:EDD_of_box}.
%
\section{Implications for computational complexity}\label{sec:complexity}

We discuss the implications of Theorem~\ref{Th:EDD=MV} for the computational complexity of computing critical points of the Euclidean
distance~\eqref{critical_point_problem}.

\subsection{Polyhedral homotopy is optimal for EDD}

Polynomial homotopy continuation is an algorithmic framework for numerically solving polynomial equations which builds upon the following
basic idea:
Consider the system of $m$ polynomials $F(x)=(f_1(x),\ldots, f_m(x))= 0$ in variables $x=(x_1,\ldots,x_m)$.
The approach to solve $F(x)=0$ is to generate another system~$G(x)$ (the \demph{start system}) whose zeros are known.
Then, $F(x)$ and $G(x)$ are joined by a \demph{homotopy}, which is a system $H(x,t)$ of polynomials in $m{+}1$ variables with $H(x,1)=G(x)$ and~$H(x,0)=F(x)$.
Differentiating $H(x,t)=0$ with respect to $t$ leads to an ordinary differential equation called \demph{Davidenko equation}.
The ODE is solved by standard numerical continuation methods with initial values the zeros of $G(x)$. This process is usually called \demph{path-tracking} and \demph{continuation}.
For details see \cite{Sommese:Wampler:2005}.

One instance of this framework is the \demph{polyhedral homotopy} of Huber and Sturmfels \cite{HS1995}.
It provides a start system $G(x)$ for polynomial homotopy continuation and a homotopy~$H(x,t)$ such that the following holds: Let
$Q_1,\ldots,Q_m$ be the Newton polytopes of~$F(x)$.
Then, for all~$t\in(0,1]$ the system of polynomials $H(x,t)$ has $\MV(Q_1,\ldots,Q_m)$ isolated zeros (at $t=0$ this can fail,
because the input $F(x)=H(x,0)$  may have fewer than $\MV(Q_1,\ldots,Q_m)$ isolated zeroes).
Polyhedral homotopy is implemented in many polynomial homotopy continuation software {packages};
for instance in \texttt{HomotopyContinuation.jl} \cite{BT2018}, \texttt{HOM4PS} \cite{Lee:Li:Tsai:2008}, \texttt{PHCPack}
\cite{Verschelde:PHCpack}.

Theorem \ref{Th:EDD=MV} implies that {the} polyhedral homotopy is optimal for computing ED-critical points in the following sense:
If we assume that the continuation of zeroes has unit cost, then the complexity of solving a system of polynomial equations $F(x)=0$ by
polynomial homotopy continuation is determined by the number of paths that have to be tracked.
{This number is at least as large as the number of solutions to $F(x)=0$ that are computed.}
We say that a homotopy is \demph{optimal} if the following three properties hold:
(1) the start system $G(x)$  has as many zeros as the input $F(x)$;
(2) all continuation paths end in a zero of $F(x)$; and~(3) for  {every zero} of $F(x)$ there is a continuation path which converges to it.
In an optimal homotopy no continuation paths have to be sorted out.
T{hat is, t}he number of paths {which need to be tracked is optimal.}

We now have the following consequence of Theorem \ref{Th:EDD=MV}{, as $\calL_{f,u}=0$ has $\MV(P,P_1,\dotsc,P_n)$ isolated solutions}.

\begin{corollary}\label{cor_complexity}
If $f$ is general given its support $\calA$ with $0\in \calA$, polyhedral homotopy is optimal for solving $\calL_{f,u}=0$.
\end{corollary}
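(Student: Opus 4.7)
The plan is to verify each of the three conditions defining optimality, drawing on Theorem~\ref{Th:EDD=MV} together with the defining properties of the polyhedral homotopy.

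I would begin by comparing cardinalities. By the construction of Huber--Sturmfels~\cite{HS1995}, the start system $G$ of the polyhedral homotopy associated to $F=\calL_{f,u}$ has exactly $\MV(P,P_1,\dotsc,P_n)$ zeros. Theorem~\ref{Th:EDD=MV} says that for $f\in U$ and $u\in\CC^n$ general, the target system $\calL_{f,u}$ also has $\MV(P,P_1,\dotsc,P_n)$ zeros, each occurring with multiplicity one. This equality of counts is precisely condition~(1) in the definition of optimality.

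Next I would address the path dynamics. The polyhedral homotopy initiates one smooth continuation path at each zero of $G$, so there are $\MV(P,P_1,\dotsc,P_n)$ paths in total, each defined and smooth on $(0,1]$. As $t\to 0$, each path either converges to a finite isolated zero of $\calL_{f,u}$ or diverges. The simplicity assertion in Theorem~\ref{Th:EDD=MV} rules out path endpoints at multiple or non-isolated zeros; combined with the fact that $\calL_{f,u}$ has exactly as many simple isolated zeros as there are paths, a standard continuation-plus-counting argument (smooth paths cannot coalesce at a simple zero, and distinct simple zeros cannot be limits of the same path) forces a bijection between paths and zeros of $F$. This bijection is precisely conditions~(2) and~(3) simultaneously.

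The main obstacle — really the only substantive one — is invoking the correct result from polyhedral-homotopy theory to conclude that no path diverges to infinity as $t\to 0$ whenever the BKK bound is attained by a target system with only simple isolated zeros. This is known in the literature but would need to be cited with some care, since in general polyhedral homotopy paths \emph{can} diverge when the number of finite isolated zeros of $F$ is strictly less than $\MV$. In our situation this pathological behavior is excluded because the equality $\EDD(f)=\MV(P,P_1,\dotsc,P_n)$ of Theorem~\ref{Th:EDD=MV} already saturates the BKK count. Once this standard fact is cited, the corollary is immediate from the mixed-volume equality.
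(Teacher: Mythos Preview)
Your proposal is correct and follows the same approach as the paper: both derive the corollary directly from Theorem~\ref{Th:EDD=MV} together with the defining property of the polyhedral homotopy that the start system has $\MV(P,P_1,\dotsc,P_n)$ zeros. The paper treats this as a one-line consequence (stated just before the corollary), while you spell out the verification of conditions~(1)--(3) in detail; your counting argument for (2) and (3) is exactly the implicit reasoning behind the paper's claim.
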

Corollary \ref{cor_complexity} is {is an instance} of a structured problem for which we have an optimal homotopy available.

In our definition of optimal homotopy we ignored the computational complexity of
path-tracking in polyhedral homotopy. We want to emphasize that this is an important part of contemporary research.
We refer to Malajovich's work \cite{Malajovich2017, Malajovich2019, Malajovich2020}.

\subsection{Computing real points on real algebraic sets}\label{sec:real_points}

Hauenstein  \cite{Hauenstein2013} observed that solving the Lagrange multiplier equations $\calL_{f,u}=0$ gives at least one point on
each connected component of the real algebraic set $X=\calV_{\RR}{(f)}$.
%
%
Indeed, every real solution to~$\calL_{f,u}=0$ corresponds to a critical point of the distance function from
\eqref{critical_point_problem}.
Every connected component of $X$ contains at least one such critical point.

Corollary \ref{cor_complexity} shows that polyhedral homotopy provides an optimal start system for Hauenstein's approach.
Specifically, Corollary \ref{cor_complexity} implies that when using polyhedral homotopy in the algorithm in
\cite[Section  2.1]{Hauenstein2013}, one does not need to distinguish between the sets $E_1$ (=~continuation paths which converge to a
solution to $\calL_{f,u}=0$) and $E$ (=~continuation paths which diverge).
This reduces the complexity of Hauenstein's algorithm, who puts his work in the context of complexity in real algebraic geometry
\cite{ARS2002, BPR1996, RRS2000, Seidenberg1954}.

\subsection{Certification of ED-critical points}\label{certification_discussion}

We consider \demph{a posteriori certification} for polynomial homotopy continuation:
Zeros are certified after and not during the (inexact) numerical continuation.
Implementations using exact arithmetic \cite{HS2012, LeeM2}  or interval arithmetic \cite{BRT2020, LeeM2, Rump1999} are available.
In particular, box interval arithmetic in $\mathbb C^n$ is powerful in combination with our results.
We explain this.

Box interval arithmetic in the complex numbers is arithmetic with intervals of the form
$\{x+{\sqrt{-1}}y \mid x_1\leq x\leq x_2, \, y_1\leq y\leq y_2\}$ for $x_1,x_2,y_1,y_2\in\mathbb R$.
Box interval arithmetic in $\mathbb C^n$ uses products of such intervals.
{By Theorem~\ref{Th:EDD=MV}, if $f$ is general given its support and $u\in\CC^n$ is general, then  $\calL_{f,u}$ has exactly
  $\MV(P,P_1,\dotsc,P_n)$ solutions.
  Therefore, if we compute $\MV(P,P_1,\dotsc,P_n)$ numerical approximations to solutions, and then certify that each}
corresponds to a true zero, and if we can certify that those true zeros are pairwise distinct, we have provably obtained all zeros
of $\calL_{f,u}$.
Furthermore, if we compute box intervals in~$\mathbb C^{n+1}$ which provably contain the zeros of $\calL_{f,u}$, {then} we can use those
intervals to certify whether a zero is real (see \cite[Lemma 4.8]{BRT2020}) or whether it is not real (by checking if the intervals
intersect the real line; this is a property of box intervals).

If it is possible to classify reality for all zeros, we can take  {a set of} intervals $\{r_1,\ldots,r_k\}$ of $\RR^n$ which
contain the real critical points of the distance function $d_X$ from~\eqref{critical_point_problem}.
The $r_j$ are obtained from the coordinate projection $(\lambda,x)\mapsto x$ of the intervals containing the real zeros of $\calL_{f,u}$.
Setting $d_j:=\{d_X(s)\mid s\in r_j\}$ gives a set of intervals  $\{d_1,\ldots,d_k\}$ of $\RR$.
If there exists $d_i$ such that~$d_i\cap d_j =\emptyset $ and $\min d_i < \min d_j$  for all $i\neq j$, then this is a proof that the
minimal value of~$d_X$ is contained in $d_i$ and that the minimizer for $d_X$ is contained in $r_i$.
%

\section{Bernstein's Theorem}\label{S:proof}
The relation between number of solutions to a polynomial system and mixed volume is given by
Bernstein's Theorem~\cite{bernstein}.

Let $g_1,\dotsc,g_m\in\CC[x_1,\dotsc,x_m]$ be $m$ polynomials with Newton polytopes $Q_1,\dotsc,Q_m$.
Let~\defcolor{$(\CC^\times)^m$} be the complex torus of $m$-tuples of nonzero complex numbers and \defcolor{$\#\calV_{\CC^\times}(g_1,\dotsc,g_m)$} be the number of isolated solutions to $g_1=\dotsb=g_m=0$ in~$(\CC^\times)^m$, counted
by their algebraic multiplicities.
Bernstein's Theorem~\cite{bernstein} asserts that
 \begin{equation}\label{Eq:BKK_Bound}
   \#\calV_{\CC^\times}(g_1,\dotsc,g_m)\ \leq\ \MV(Q_1,\dotsc,Q_m)\,,
 \end{equation}
and the inequality becomes an equality when each $g_i$ is general given its support.
The restriction of the domain to $(\CC^\times)^m$ is because Bernstein's Theorem
concerns Laurent polynomials, in which the exponents in a monomial are allowed to be negative.

An important special case of Bernstein's Theorem was proven earlier by Kushnirenko.
Suppose that the polynomials $g_1,\dotsc,g_m$ all have the same Newton polytope. This means that~$Q_1=\dotsb=Q_m$.
We write $Q$ for this single polytope.
Then, the mixed volume in~\eqref{Eq:BKK_Bound} becomes
 $ \MV(Q_1,\dotsc,Q_m) = m! \Vol(Q)$,
where \defcolor{$\Vol(Q)$} is the $m$-dimensional Euclidean volume of $Q$.
Kushnirenko's Theorem~\cite{Kushnirenko} states that if $g_1,\dotsc,g_m$ are general polynomials with Newton polytope $Q$, then
\[
   \#\calV_{\CC^\times}(g_1,\dotsc,g_m)\ =\ m! \Vol(Q)\,.
\]
That the mixed volume becomes the normalized Euclidean volume when the polytopes are equal is
one of three properties which characterize mixed volume, the others being symmetry and multiadditivity.
This is explained in~\cite[Sect.~IV.3]{Ewald} and recalled in Section~\ref{sec:rectangular_parallelepiped}.

The inequality~\eqref{Eq:BKK_Bound} is called the BKK bound~\cite{BKK}.
The key step in proving it is what we call
\demph{Bernstein's Other Theorem}.
This {\it a posteriori} gives the condition under which the inequality~\eqref{Eq:BKK_Bound} is strict (equivalently, when it is an
equality).
We explain that.

Let $g\in\CC[x_1,\dotsc,x_m]$ be a polynomial with support $\calA\subset\ZZ^m$, so that
\[
   g\ =\ \sum_{a\in\calA} c_a x^a\  \qquad (c_a\in\CC)\,.
\]
For $w\in\ZZ^m$, define \defcolor{$h_w(\calA)$} to be the minimum value of the linear function $x\mapsto w\cdot x$ on the set $\calA$
and write \defcolor{$\calA_w$} for the subset of $\calA$ on which this minimum occurs.
This is the \demph{face} of $\calA$ exposed by $w$.
We write
\begin{equation}\label{def_g_w}
  \defcolor{g_w}\ :=\ \sum_{a\in \calA_w}  c_a z^a\,,
\end{equation}
for the restriction of $g$ to $\calA_w$.
For $w\in\ZZ^{m}$ and a system $G=(g_1,\dotsc,g_m)$ of $m$ polynomials, the \demph{facial system}
is $\defcolor{G_w}:=((g_1)_{w},\dotsc,(g_m)_{w})$.

We state Bernstein's Other Theorem~\cite[Theorem B]{bernstein}.

\begin{proposition}[Bernstein's Other Theorem]\label{P:Bernstein}
  Let $G=(g_1,\dotsc,g_m)$ be a system of Laurent polynomials in variables $x_1,\dotsc,x_m$.
  For each $1\leq i \leq m$, let $\calA_i$ be the support of $g_i$ and $Q_i=\conv(\calA_i)$ its Newton polytope.
  Then
  \[
  \#\calV_{\CC^\times}(g_1,\dotsc,g_m)\ <\ \MV(Q_1,\dotsc,Q_m)
  \]
  if and only if there is $0\neq w\in\ZZ^{m}$ such that the facial system $G_w$ has a solution in $(\CC^\times)^{m}$.
  Otherwise, $ \#\calV_{\CC^\times}(g_1,\dotsc,g_m)$ is equal to $\MV(Q_1,\dotsc,Q_m)$
\end{proposition}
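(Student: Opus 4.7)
The plan is to prove both directions of the equivalence at once by compactifying $(\CC^\times)^m$ to a projective toric variety and decomposing the intersection of the zero divisors of $g_1,\dotsc,g_m$ into a torus part and a boundary part. First, I would choose a smooth projective toric variety $X_\Sigma$ whose fan $\Sigma$ refines the common refinement of the inner normal fans of $Q_1,\dotsc,Q_m$. Each Newton polytope $Q_i$ then determines a globally generated line bundle $\calL_i$ on $X_\Sigma$, and each $g_i$ (after multiplication by a suitable Laurent monomial) extends to a global section $\bar g_i\in H^0(X_\Sigma,\calL_i)$. The boundary $X_\Sigma\smallsetminus(\CC^\times)^m$ is a union of torus-invariant prime divisors $D_\rho$, one per ray $\rho\in\Sigma(1)$ with primitive lattice generator $w_\rho\in\ZZ^m$.

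Second, I would invoke the toric Bezout formula, which gives the top intersection number $\calL_1\dotsb\calL_m=\MV(Q_1,\dotsc,Q_m)$ on $X_\Sigma$. Decomposing the zero scheme of $(\bar g_1,\dotsc,\bar g_m)$ into its torus part and contributions concentrated on boundary divisors then produces
\[
\MV(Q_1,\dotsc,Q_m)\ =\ \#\calV_{\CC^\times}(g_1,\dotsc,g_m)\ +\ \sum_{\rho\in\Sigma(1)} c_\rho,
\]
where each $c_\rho\geq 0$ is the length of the part of the intersection supported on $D_\rho$, and the torus contribution is exactly the number of isolated zeros counted with algebraic multiplicity. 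Strict inequality $\#\calV_{\CC^\times}(G)<\MV(Q_1,\dotsc,Q_m)$ thus occurs if and only if some $c_\rho>0$.

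Third, the key computation is the identification of the restrictions $\bar g_i|_{D_\rho}$: in natural toric trivializations, this restriction equals a fixed monomial times the facial polynomial $(g_i)_{w_\rho}$ regarded as a regular function on the dense torus $T_\rho\subset D_\rho$. Since every exponent of $(g_i)_{w_\rho}$ lies on the supporting hyperplane $\{a\cdot w_\rho=h_{w_\rho}(\calA_i)\}$, the facial polynomial is homogeneous of a single weight under the one-parameter subgroup generated by $w_\rho$, so it vanishes at some point of $T_\rho$ if and only if it vanishes at some point of $(\CC^\times)^m$. Hence $c_\rho>0$ precisely when the facial system $G_{w_\rho}$ has a solution in $(\CC^\times)^m$, which gives the claimed equivalence.

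The hardest part, and the place where the toric machinery must be handled carefully, will be to verify that a common torus zero of the restricted sections actually contributes positive length to the intersection scheme, rather than being erased by cancellations from other boundary components. A cleaner substitute avoiding intersection-theoretic subtleties is to deform: embed $G$ into a generic family $G(t)=G+tH$ whose general member already attains the BKK bound in the torus (by the equality case of~\eqref{Eq:BKK_Bound}), and study Puiseux-series expansions $x(t)=t^w y+\text{(higher order)}$ of each path $x(t)$ escaping to the boundary as $t\to 1$. Substitution shows that such $y\in(\CC^\times)^m$ must solve $G_w$, and conversely each facial solution is the starting point of such a divergent path, so the deficit $\MV(Q_1,\dotsc,Q_m)-\#\calV_{\CC^\times}(G)$ counts precisely the divergent paths and vanishes if and only if no facial system is solvable.
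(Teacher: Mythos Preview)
The paper does not prove this proposition. It is stated with a direct citation to Bernstein's original article~\cite[Theorem~B]{bernstein} and is used throughout as a black box; there is no argument in the paper to compare your proposal against.

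For what it is worth, the second route you sketch---embedding $G$ in a generic one-parameter family and reading off the leading $t$-weights of divergent solution branches via Puiseux expansions---is essentially Bernstein's own argument, so in that sense you have reconstructed the intended proof. One caution: your sentence ``conversely each facial solution is the starting point of such a divergent path'' overstates what is needed and what is true. For the direction ``facial solution $\Rightarrow$ strict inequality'' you do not construct a path from a given facial zero; rather, one shows (by a semicontinuity or dimension count, or by the toric-boundary argument you outlined first) that the presence of a facial zero forces a positive contribution at infinity. The toric-compactification approach you describe is also standard, but, as you note yourself, verifying that a boundary zero of the restricted sections contributes positive length requires care and is usually handled either by a local computation in the orbit closure or by reverting to the degeneration argument.
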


While this statement is similar to Bernstein's formulation, we use its contrapositive, that
the number of solutions equals the mixed volume when no facial system has a solution.
We use Bernstein's Other Theorem when $G = \calL_{f,u}$ and $m=n{+}1$.
For this, we must show that for a general polynomial $f$ with support $\calA\subset\NN^n$, all the solutions to $\calL_{f,u}=0$ lie
in $(\CC^\times)^{n+1}$ and no facial system  $(\calL_{f,u})_w=0$ for $0\neq w\in\ZZ^{n+1}$ has a solution in $(\CC^\times)^{n+1}$.
The later is given by the next theorem which is proved in Section \ref{S:EDD=MV}.

\begin{theorem}\label{Thm:EmptyFaces}
  Suppose that $f$ is general given its support $\calA$, that $0\in\calA$, and that $u\in\CC^n$ is general.
  For any nonzero $w\in\ZZ^{n+1}$,  the facial system $(\calL_{f,u})_w$ has no solutions in~$(\CC^\times)^{n+1}$.
\end{theorem}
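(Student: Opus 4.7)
The plan is a case analysis on the weight vector $w=(w_0,w')\in\ZZ\times\ZZ^n$, with $w_0$ the weight on $\lambda$ and $w'=(w_1,\dots,w_n)$ the weight on $(x_1,\dots,x_n)$. Writing $g_0=f$ and $g_i=\partial_if-\lambda u_i+\lambda x_i$ for $i\geq 1$, the support of $g_i$ inside $\ZZ^{n+1}$ consists of the $\partial_if$-monomials $\{(0,a-\be_i):a\in\calA,a_i\geq 1\}$ with $w$-values $w'\cdot a-w_i$, together with the two ``Lagrange monomials'' $(1,0)$ and $(1,\be_i)$ of $w$-values $w_0$ and $w_0+w_i$. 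The face $(g_i)_w$ keeps only the terms of minimal $w$-value, so its shape is controlled by the order relation between $w_0$, $w_0+w_i$, and $\min\{w'\cdot a-w_i\}$.

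First I would dispose of the monomial facial equations. If some $i$ has $w_i>0$ and $w_0$ strictly below every $\partial_if$ $w$-value, then $(g_i)_w=-\lambda u_i$, which for generic $u$ (so $u_i\neq 0$) admits no zero in $(\CC^\times)^{n+1}$. Symmetrically, if $w_i<0$ and $w_0+w_i$ is the unique minimum, then $(g_i)_w=\lambda x_i$ and no torus solution exists. After these two observations, I may restrict to weights $w$ for which every $(g_i)_w$ either contains at least one $\partial_if$-monomial, or (when $w_i=0$) contains both Lagrange monomials simultaneously, which in the latter case reads $\lambda(x_i-u_i)$ and forces $x_i=u_i$ on any torus solution.

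The next step is to eliminate $\lambda$ and pass to a system in $x$ alone. Wherever $(g_i)_w$ mixes $\partial_if$-monomials with a single Lagrange monomial, the equation is linear in $\lambda$ and can be solved rationally, $\lambda=\pm(\partial_if)_{w'}(x)/u_i$ or $\lambda=-(\partial_if)_{w'}(x)/x_i$, with compatibility between different $i$'s translating into further $x$-equations. The residual system always contains $f_{w'}(x)=0$ together with equations built from $w'$-faces of partial derivatives and from the constants $u_i$. I would now invoke the hypothesis $0\in\calA$: whenever $w'\cdot a\geq 0$ for every $a\in\calA$, the minimum $h_{w'}(\calA)$ is attained at $0$, so $f_{w'}$ contains the nonzero constant term $c_0$; consequently any reduced system in which $f_{w'}$ collapses to $c_0$ alone is inconsistent. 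In the remaining directions $w'$, a Bertini-style dimension count on the incidence variety of triples $(f,u,x)$ satisfying the reduced facial system, viewed as a linear condition on the coefficients $(c_a)_{a\in\calA}$, shows that its projection to $f$-space has positive codimension. Finally, because only finitely many combinatorial types of faces arise as $w$ ranges over $\ZZ^{n+1}\setminus\{0\}$, the union of the exceptional loci is contained in a proper Zariski-closed subset of coefficient space, giving the required genericity statement.

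The main obstacle I anticipate is carrying out Step 3 uniformly across the combinatorial types produced in Step 2. The delicate point is bookkeeping: after eliminating $\lambda$, the reduced $x$-system mixes genuine facial equations $(\partial_if)_{w'}=0$ with inhomogeneous ones $(\partial_if)_{w'}=c_i(u)$, and I must verify in each type that the hypothesis $c_0\neq 0$ together with generic independence of the remaining coefficients actually rules out torus solutions. This is where the assumption $0\in\calA$ is essential, and where a careful combinatorial argument about which $\partial_if$ can vanish identically on $\calA_{w'}$—together with a transversality argument against the generic $u$—should complete the proof.
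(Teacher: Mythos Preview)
Your setup is correct and matches the paper's: the seven-way split of $(\partial_if-\lambda(u_i-x_i))_w$ according to the order relation among $h^*_i$, $w_0$, and $w_0+w_i$, and the immediate disposal of the monomial cases, are exactly how the paper begins. The organizing partition $\{1,\dots,n\}=\calI\sqcup\calJ$ by whether $\partial_i f_w\neq 0$ is also implicit in your plan.

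The genuine gap is in the case the paper calls \emph{Case 3}: some $i\in\calI$ has $w_i<0$. After you eliminate $\lambda$, the reduced system in the effective variables $x_\calI$ consists of $f_w=0$, the equations $\partial_k f_w=0$ for $k$ in a subset $\calK\subset\calI$, and compatibility equations $x_m\,\partial_{m'}f_w=x_{m'}\,\partial_m f_w$ for $m,m'$ in the complement $\calM=\calI\smallsetminus\calK$. That is $1+|\calK|+(|\calM|-1)=|\calI|$ equations in $|\calI|$ variables, so a naive dimension count gives dimension zero, not negative; you cannot conclude emptiness from counting alone. Worse, these $|\calI|+1$ conditions $f_w(x)=0$, $\partial_i f_w(x)=-\lambda x_i$ are \emph{not} independent as affine-linear conditions on the coefficients $(c_a)$: the Euler relation $h^*f_w=\sum_i w_i x_i\,\partial_i f_w$ for the quasihomogeneous polynomial $f_w$ forces a dependency, so your incidence variety may have dimension equal to that of coefficient space, and dominance of the projection to $f$-space cannot be excluded by dimension.

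The paper's missing ingredient is precisely this Euler identity. One checks that every $m\in\calM$ has the same weight $w_m=w^*=\tfrac12(h^*-w_0)$; substituting $\partial_k f_w=0$ and $\partial_m f_w=-\lambda x_m$ into Euler's formula gives
\[
0\ =\ h^* f_w(x)\ =\ -\lambda\,w^*\sum_{m\in\calM} x_m^2,
\]
forcing $x$ onto the quadric $Q:=\sum_{m\in\calM}x_m^2=0$. The same substitutions show $\nabla f_w(x)$ is proportional to $\nabla Q(x)$, so $\calV(f_w)$ and $\calV(Q)$ are tangent at $x_\calI$; for $f$ general this is impossible. Your proposal does not contain this argument, and the vague ``Bertini-style dimension count'' does not substitute for it. A similar specific observation (that the subsystem $f_w=\partial_i f_w=0$ for $i\in\calI$ asserts $\calV_{(\CC^\times)^\calI}(f_w)$ is singular) is what handles the $w_\calI\geq 0$, $w_0>0$ subcase, and the triangular structure exploiting the genericity of $u_\calM$ is what handles $w_0=0$; both are only gestured at in your sketch.
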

Using this theorem we can now prove Theorem \ref{Th:EDD=MV}.
\begin{proof}[Proof of Theorem \ref{Th:EDD=MV}]
Suppose that {a polynomial} $f(x) \in\CC[x_1,\dotsc,x_n]$ is general given its support $\calA$ and
that~$0\in\calA$.
We may also suppose that $u\in\CC^n\smallsetminus\calV_{\CC}(f)$ is general.
By Theorem~\ref{Thm:EmptyFaces}, no facial system $(\calL_{f,u})_w$ has a solution.
By Bernstein's Other Theorem, the Lagrange multiplier equations $\calL_{f,u}=0$ have $\MV(P,P_1,\dotsc,P_n)$ solutions in
$(\CC^\times)^{n+1}$.
It remains to show that there are no other solutions to the Lagrange multiplier equations.

For this, we use standard dimension arguments, such as \cite[Theorem 11.12]{Harris}, and freely invoke the generality
of $f$.
Consider the~\demph{incidence variety}
\[
  \defcolor{\calS_f}\ :=\ \{(u,\lambda,x)\in\CC^n_u\times \CC_\lambda \times \CC^n_x \mid \calL_{f,u}(\lambda,x)=0\}\,,
\]
which is an affine variety.
As $f=0$ is an equation in $\calL_{f,u}=0$, this is a subvariety of $\CC^n_u\times\CC_\lambda\times X_\CC$, where $X_\CC$ is the
complex hypersurface $X_\CC=\calV_\CC(f)$.

Write $\pi$ for the projection of $\calS_f$ to $X_\CC$ and let $x\in X_\CC$.
The fiber $\pi^{-1}(x)$ over $x$ is
\[
   \{ (u,\lambda)\in \CC^n_u\times \CC_\lambda \mid \nabla f(x)=\lambda(u-x)\}\,.
\]
Let $(u,\lambda)\in\pi^{-1}(f)$.
As $f$ is general, $X_\CC$ is smooth, so that $\nabla f(x)\neq 0$ and we see that~$\lambda\neq 0$ and $u\neq x$.
Thus $u = x + \tfrac{1}{\lambda}\nabla f(x)$.
This identifies the fiber $\pi^{-1}(x)$ with $\CC^\times_\lambda$, proving that~$\calS_f\to X_\CC$ is a $\CC^\times$-bundle, and thus is
irreducible of dimension $n$.

{The projection of $\calS_f$ to $\CC^n_u$ is dominant, and therefore Bertini's Theorem implies that the general fiber is
  zero-dimensional and smooth.
That is, for $u\in\CC^n_u$ general, $\calL_{f,u}=0$ has finitely many solutions and each has multiplicity 1.}

Let $\defcolor{Z}\subset X_\CC$ be the set of points of $X_\CC$ that do not lie in $(\CC^\times)^n$ and hence lie on some coordinate plane.
As $f$ is irreducible and $f(0)\neq 0$, we see that $Z$ has dimension $n{-}2$, and its inverse image $\pi^{-1}(Z)$ in $\calS_f$ has
dimension $n{-}1$.
The image $W$ of $\pi^{-1}(Z)$ under the projection to $\CC^n_u$ consists of those points $u\in \CC^n_u$ which have a solution $(x,\lambda)$
to $\calL_{f,u}(\lambda,x)=0$ with $x\not\in(\CC^\times)^n$.
Since $W$ has dimension at most $n{-}1$, this shows that for general $u$ all solutions to $\calL_{f,u}(\lambda,x)=0$
lie in $(\CC^\times)^{n+1}$ (we already showed that $\lambda\neq 0$).

This  completes the proof of Theorem~\ref{Th:EDD=MV}.
\end{proof}

\subsection{Application of Bernstein's other theorem}
To illustrate Theorem \ref{Thm:EmptyFaces}, let us consider two facial systems of the Lagrange multiplier
equations in an example.

Let $\defcolor{\partial_i\calA}$ be the support of  $\partial_i f$. It depends upon the
support $\calA$ of $f$ and the index $i$ in the following way.
Let $\defcolor{\be_i}:=(0,\ldots,0,1,0,\ldots,0)$ be the $i$th standard basis vector ($1$ is in position~$i$).
To obtain $\partial_i\calA$ from $\calA\subset\NN^n$, first remove all points $a\in\calA$ with $a_i=0$, then shift the remaining
points by $-\be_i$. The support of $\partial_i f-\lambda(u_i-x_i)$ is obtained by adding~$\be_{0}$ and~$\be_i+\be_{0}$ to
$\defcolor{\partial_i\calA}$.
{(As usual, we identify $\NN^n$ with $\{0\}\times\NN^n\subset\NN^{n+1}$.)}
Throughout the paper we associate to $\lambda$ the \demph{exponent with index $0$}.

Consider the polynomial in two variables,
\[
  f\ =\ c_{00}+c_{10}x_1+c_{01}x_2+c_{11}x_1x_2 + c_{21}x_1^2x_2\,.
\]
Its support is $\defcolor{\calA} = \{(0,0), (0,1), (1,1), (2,1), (1,0)\}$ and its Newton polytope is $P=\conv(\calA)$, which is a trapezoid.
Figure \ref{fig1} shows the Newton polytope $P$ along with the Newton polytopes of $\partial_1 f-\lambda(u_1-x_1)$ and
$\partial_2 f-\lambda(u_2-x_2)$.
These are polytopes in $\RR^3$; we plot the exponents of the Lagrange multiplier $\lambda$ in the (third) vertical direction in
Figure~\ref{fig1}.
\begin{figure}[htb]
\centering
   \begin{picture}(144,100)(-35,-8)
    \put(  0, 0){\includegraphics{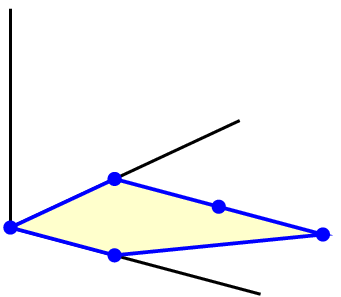}}
    \put(  6,41){\small$(0,1,0)$}    \put(53,33){\small$(1,1,0)$}
    \put(-35,18){\small$(0,0,0)$}
    \put(  5, 0){\small$(1,0,0)$}    \put(75,6){\small$(2,1,0)$}
    \put( 28,20){\small$P$}
  \end{picture}
  \quad
   \begin{picture}(130,100)(-35,-8)
    \put(  0, 0){\includegraphics{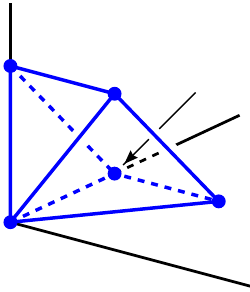}}
    \put(-35,63){\small$(0,0,1)$}      \put(15,66){\small$(1,0,1)$}
    \put( 58,60){\small$(0,1,0)$}
    \put( 50,15){\small$(1,1,0)$}    \put(-35,18){\small$(0,0,0)$}
    \put(  1, 5){\small$P_1$}
  \end{picture}
  \quad
   \begin{picture}(115,100)(-35,-8)
    \put( 0,  0){\includegraphics{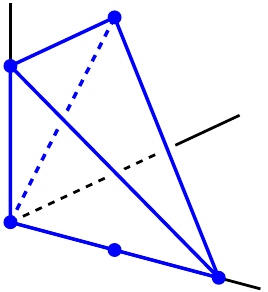}}
    \put(-35,63){\small$(0,0,1)$}    \put( 16,84){\small$(0,1,1)$}
    \put(-35,18){\small$(0,0,0)$}    \put( -1, 2){\small$(1,0,0)$}
    \put( 40,-8){\small$(2,0,0)$}
    \put( 58,25){\small$P_2$}
  \end{picture}
   \caption{The three Newton polytopes of $\calL_{f,u}$ for $f=c_{00}+c_{10}x_1+c_{01}x_2+c_{11}x_1x_2 + c_{21}x_1^2x_2$.}
   \label{fig1}
\end{figure}

The faces exposed by  $w=(0,1,0)$ are shown in red in Figure \ref{fig2}.
\begin{figure}[htb]
\centering
   \begin{picture}(144,100)(-35,-8)
    \put(  0, 0){\includegraphics{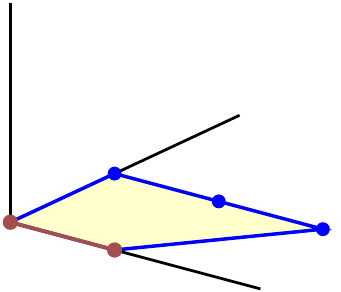}}
    \put(  6,41){\small$(0,1,0)$}    \put(53,33){\small$(1,1,0)$}
    \put(-35,18){\color{OurRed}\small$(0,0,0)$}
    \put(  5, 0){\color{OurRed}\small$(1,0,0)$}    \put(75,6){\small$(2,1,0)$}
  \end{picture}
  \quad
   \begin{picture}(130,100)(-35,-8)
    \put(  0, 0){\includegraphics{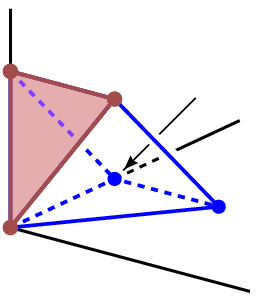}}
    \put(-35,63){\color{OurRed}\small$(0,0,1)$}      \put(15,66){\color{OurRed}\small$(1,0,1)$}
    \put( 58,60){\small$(0,1,0)$}
    \put( 50,15){\small$(1,1,0)$}    \put(-35,18){\color{OurRed}\small$(0,0,0)$}
  \end{picture}
  \quad
   \begin{picture}(115,100)(-35,-8)
    \put( 0,  0){\includegraphics{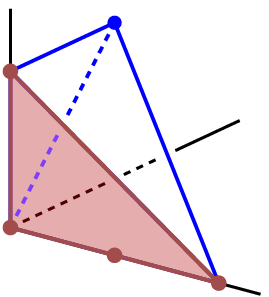}}
    \put(-35,63){\color{OurRed}\small$(0,0,1)$}    \put( 16,84){\small$(0,1,1)$}
    \put(-35,18){\color{OurRed}\small$(0,0,0)$}    \put( -1, 2){\color{OurRed}\small$(1,0,0)$}
    \put( 40,-8){\color{OurRed}\small$(2,0,0)$}
  \end{picture}
   \caption{The faces $\calA_w$, $(\calA_1)_w$ and $(\calA_2)_w$ for $w=(0,1,0)$ are shown in red.}
   \label{fig2}
\end{figure}

The corresponding facial system is
\[
   (\calL_{f,u})_w\ =\
         \begin{bmatrix}
               c_{00}+c_{10}x_1\\
               c_{10}-\lambda(u_1-x_1)\\
               c_{01}+c_{11}x_1+c_{21}x_1^2-\lambda u_2
          \end{bmatrix}\ .
\]
Let us solve $(\calL_{f,u})_w=0$.
We solve the first equation for $x_1$, and then substitute that into the second equation and solve it for $\lambda$ to obtain
\[
   x_1\ =\ -\frac{c_{00}}{c_{10}}\qquad\mbox{and}\qquad
   \lambda\ =\ \frac{c_{10}}{u_1-x_1}\ =\ \frac{c_{10}^2}{c_{10}u_1 + c_{00}}\ .
\]
Substituting these into the third equation and clearing denominators gives the equation
\[
0\ =\ (c_{10}u_1 + c_{00})(c_{10}^3 - c_{11} c_{10} c_{00} + c_{00}^2 c_{21}) \ -\ c_{10}^4 u_2
\]
which does not hold for $f,u$ general.
The proof of Theorem \ref{Thm:EmptyFaces} is divided in three cases and one involves such \demph{triangular systems}, which
are independent of some of the variables.

The faces exposed by $w=(0,-1,1)$ are shown in red in Figure \ref{fig3}.
\begin{figure}[htb]
\centering
   \begin{picture}(144,100)(-35,-8)
    \put(  0, 0){\includegraphics{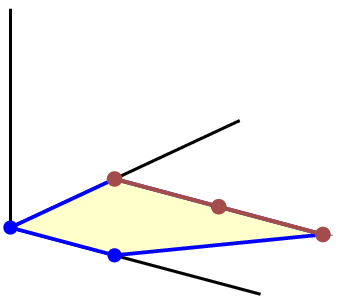}}
    \put(  6,41){\color{OurRed}\small$(0,1,0)$}    \put(53,33){\color{OurRed}\small$(1,1,0)$}
    \put(-35,18){\small$(0,0,0)$}
    \put(  5, 0){\small$(1,0,0)$}    \put(75,6){\color{OurRed}\small$(2,1,0)$}
  \end{picture}
  \quad
   \begin{picture}(130,100)(-35,-8)
    \put(  0, 0){\includegraphics{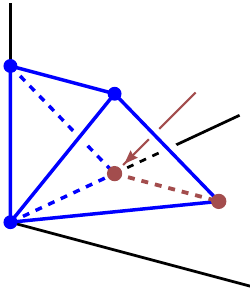}}
    \put(-35,63){\small$(0,0,1)$}      \put(15,66){\small$(1,0,1)$}
    \put( 58,60){\color{OurRed}\small$(0,1,0)$}
    \put( 50,15){\color{OurRed}\small$(1,1,0)$}    \put(-35,18){\small$(0,0,0)$}
  \end{picture}
  \quad
   \begin{picture}(115,100)(-35,-8)
    \put( 0,  0){\includegraphics{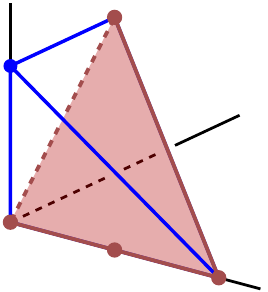}}
    \put(-35,63){\small$(0,0,1)$}    \put( 16,84){\color{OurRed}\small$(0,1,1)$}
    \put(-35,18){\color{OurRed}\small$(0,0,0)$}    \put( -1, 2){\color{OurRed}\small$(1,0,0)$}
    \put( 40,-8){\color{OurRed}\small$(2,0,0)$}
  \end{picture}
  \caption{The faces $\calA_w$, $(\calA_1)_w$ and $(\calA_2)_w$ for $w=(0,-1,1)$ are shown in red.}
  \label{fig3}
\end{figure}
The corresponding facial system is
 \[
   (\calL_{f,u})_w
    \   =\
  \begin{bmatrix}
     c_{01}x_2+c_{11}x_1x_2+c_{21}x_1^2x_2\\
     c_{11}x_2+2c_{21}x_1x_2\\
     c_{01}+c_{11}x_1+c_{21}x_1^2-\lambda x_2
  \end{bmatrix}
    \  =\
  \begin{bmatrix} f_w \\
    \partial_1 f_w  \\
    \partial_2 f_w -\lambda x_2
  \end{bmatrix}\ .
\]
Observe that  $h_w(\mathcal A)=-1$ and that we have
 \begin{align}
    h_w(\mathcal A)\cdot f_w\  =\  -f_w{\ }&=\ w_1\cdot x_1\cdot \partial_1f_w + w_2\cdot  x_2\cdot \partial_2f_w) \nonumber\\
     &=\  0\cdot x_1\cdot (\partial_1f_w) + (-1)\cdot  x_2\cdot (\partial_2f_w)\ =\ x_2\partial_2 f\,.\label{Eq:Euler_Ex}
\end{align}
 This is an instance of \demph{Euler's formula for quasihomogeneous polynomials} (Lemma \ref{lemma_derivative}).
 If $(\lambda,x)$ is a solution to $(\calL_{f,u})_w=0$, then  the third equation becomes $\partial_2 f = \lambda x_2$.
 Substituting this into~\eqref{Eq:Euler_Ex} gives $0 = -f_w = \lambda x_2^2$, which has no solutions in $(\mathbb C^\times)^3$.
 One of the cases in the proof of Theorem \ref{Thm:EmptyFaces} exploits Euler's formula in a similar way.  \hfill$\diamond$

\section{The facial systems of the Lagrange multiplier equations are empty}\label{S:EDD=MV}
Before giving a proof of Theorem~\ref{Thm:EmptyFaces}, we present two lemmas to help understand the support of $f$ and its interaction with
derivatives of $f$, and then make some observations about the facial system $(\calL_{f,u})_w$.

Let $f\in\CC[x_1,\dotsc,x_m]$ be a polynomial with support  $\defcolor{\calA}\subset \NN^{n}$, which is the set of the exponents of monomials
of $f$.
We assume that $0\in\calA$.
As before we write ${\partial_i\calA}\subset \NN^{n}$ for the support of the partial derivative
$\partial_i f$.
For $w\in\ZZ^n$, the linear function $x\mapsto w\cdot x$ takes minimum values on $\calA$ and on $\partial_i\calA$, which we denote by
 \begin{equation}\label{def_optimal_value}
  \defcolor{h^*}\ =\ h_w(\calA)\ :=\ \min_{a\in \calA} w\cdot a \qquad\text{and}\qquad
  \defcolor{h^*_i}\ =\ h_w(\partial_i\calA)\ :=\ \min_{a\in \partial_i \calA} w\cdot a\,.
 \end{equation}
(We suppress the dependence on $w$.)
Since $0\in\calA$, we have $h^*\leq 0$.
Also, if $h^*=0$ and if there is some $a\in\calA$ with $a_i>0$, then $w_i\geq 0$.
%
%

Recall that the subsets of $\calA$ and $\partial_i\calA$ where the linear function $x\mapsto w\cdot x$
is minimized are their \demph{faces} exposed by $w$,
 \begin{equation}\label{def_optimal_sets}
  \calA_w\ :=\ \{a\in \calA \mid w\cdot a = h^*\} \quad\text{and}\quad
  {(\partial_i\calA)_w}\ :=\ \{a\in \partial_i\calA \mid w\cdot a = h^*_i\}\,.
 \end{equation}
 {The proof below of Lemma~\ref{lemma_helpful} shows that $\partial_i(\calA_w)\subset (\partial_i\calA)_w$ with equality when
   $\emptyset\neq \partial_i(\calA_w)$.}
 As in~\eqref{def_g_w} we denote by $f_w$ the restriction of $f$ to $\calA_w$, and similarly $(\partial_i f)_w$ denotes
the restriction of the partial derivative $\partial_i f$ to $\calA_w$.
{The $i$th partial derivative of $f_w$ is $\partial_i(f_w)$.}

Our proof of Theorem~\ref{Thm:EmptyFaces} uses the following two results.

\begin{lemma}\label{lemma_helpful}
  For each $1\leq i\leq n$, {we have $h^*_i\geq h^*-w_i$.}
  If $\partial_i f_w \neq 0$, then $\partial_i(f_w) = (\partial_i f)_w$ and $h_i^* = h^* - w_i$.
\end{lemma}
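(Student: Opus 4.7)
The plan is to exploit the explicit description of $\partial_i\calA$ given just before the lemma: namely that $\partial_i\calA = \{a - \be_i : a\in\calA,\ a_i\geq 1\}$, together with the fact that the coefficient of $x^{a-\be_i}$ in $\partial_i f$ is $a_i$ times the coefficient of $x^a$ in $f$. Everything in the lemma then follows by a direct comparison of minima and supports.

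First I would prove the inequality $h^*_i \geq h^* - w_i$. Using the description above,
\[
h^*_i \ =\ \min_{\substack{a\in\calA\\ a_i\geq 1}} w\cdot(a-\be_i)\ =\ \Bigl(\min_{\substack{a\in\calA\\ a_i\geq 1}} w\cdot a\Bigr) - w_i\ \geq\ h^* - w_i,
\]
since we are minimizing $w\cdot a$ over a subset of $\calA$. This step is essentially free.

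Next, assume $\partial_i f_w\neq 0$. This is equivalent to saying that there is some $a^\ast\in\calA_w$ with $a^\ast_i\geq 1$; the derivative of the monomial $c_{a^\ast}x^{a^\ast}$ then produces a nonzero term $a^\ast_i c_{a^\ast} x^{a^\ast-\be_i}$ that cannot be cancelled in $\partial_i(f_w)$, since different monomials in $f_w$ produce different monomials after differentiating. Using such an $a^\ast$ in the formula for $h^*_i$ above yields $h^*_i\leq w\cdot a^\ast - w_i = h^* - w_i$, matching the lower bound; hence $h^*_i = h^* - w_i$.

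It remains to identify $\partial_i(f_w)$ with $(\partial_i f)_w$. Both are polynomials obtained from $\partial_i f$ by deleting some terms, so it suffices to match their supports. The support of $\partial_i(f_w)$ is $\{a-\be_i : a\in\calA_w,\ a_i\geq 1\}$, and its coefficient on $x^{a-\be_i}$ is $a_i c_a$. On the other hand, using $h^*_i = h^* - w_i$,
\[
(\partial_i\calA)_w \ =\ \{a-\be_i : a\in\calA,\ a_i\geq 1,\ w\cdot(a-\be_i)=h^*-w_i\}\ =\ \{a-\be_i : a\in\calA_w,\ a_i\geq 1\},
\]
which is the same set, and the coefficient of $(\partial_i f)_w$ on $x^{a-\be_i}$ is again $a_i c_a$.

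The only subtlety worth flagging is the possibility of cancellation when differentiating $f_w$, which I handled above by noting that distinct monomials in $f_w$ produce distinct monomials in $\partial_i(f_w)$; beyond that, the proof is a short bookkeeping argument and I do not anticipate a serious obstacle.
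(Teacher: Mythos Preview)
Your proof is correct and follows essentially the same approach as the paper: both exploit the bijection $a\leftrightarrow a-\be_i$ between $\{a\in\calA:a_i\geq 1\}$ and $\partial_i\calA$ to compare the two minima and the two supports. The paper phrases the argument in the reverse direction (starting from $a\in\partial_i\calA$ and passing to $a+\be_i\in\calA$) and proves the set equality $\partial_i(\calA_w)=(\partial_i\calA)_w$ via two inclusions, but the content is the same bookkeeping you carry out.
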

{In the following, we write  \defcolor{$\partial_i f_w$} for $\partial_i(f_w)$ and write $\defcolor{\partial_i\calA_w}$
   for $(\partial_i\calA)_w$.}
\begin{proof}
  Fix $1\leq i\leq n$.
  {Let $a\in\partial_i\calA$.
  Then $a+\be_i\in\calA$ and so $h^*\leq w\cdot (a+\be_i)=w\cdot a +w_i$.
  Thus $w\cdot a\geq h^*-w_i$.
  Taking the minimum over $a\in\partial_i\calA$ gives $h^*_i \geq h^*-w_i$.}

 {Suppose now that $\emptyset\neq \partial_i(\calA_w)$.
   Let $a\in\partial_i(\calA_w)$.
   Then, we have $a+\be_i\in\calA_w$ and $h^*=w\cdot(a+\be_i)=w\cdot a + w_i$.
   But then $h^*-w_i=w\cdot a\geq h^*_i$, which implies that $h_i^* = h^* - w_i$.
   It also implies that $w\cdot a=h^*_i$.
   Since $\calA_w\subset\calA$, we have that $a\in\partial_i\calA$.
   As~$w\cdot a=h^*_i$, we conclude that~$a\in (\partial_i\calA)_w$.
   This proves the inclusion $\partial_i(\calA_w)\subset(\partial_i\calA)_w$.}

 {For the other inclusion, suppose that $\partial_i(\calA_w)\neq\emptyset$.
   As we showed, it holds that $h_i^* = h^* - w_i$.
   Let $a\in (\partial_i\calA)_w$.
   Then $w\cdot a= h^*_i$ and as $a\in\partial_i\calA$, we have $a+\be_i\in\calA$.
   But then, we have $w\cdot(a+\be_i)=h^*_i+w_i=h^*$, so that $a+\be_i\in\calA_w$.
   We conclude that $a\in\partial_i(\calA_w)$.}

 {To complete the proof, observe that $\partial_i f_w \neq 0$ is equivalent to $\partial_i(\calA_w)\neq\emptyset$,
   and that $\partial_i(f_w)$ and $(\partial_i f)_w$ are subsums of $\partial_i f$
   over terms corresponding to $\partial_i(\calA_w)$ and to $(\partial_i\calA)_w$, respectively.}
\end{proof}

The restriction $f_w$ of $f$ to the face of $\calA$ exposed by $w$ is quasihomogeneous with respect to the weight $w$,
and thus it satisfies a weighted version of Euler's formula.

\begin{lemma}[Euler's formula for quasihomogeneous polynomials]
  \label{lemma_derivative}
  For $w\in\ZZ^n$ we have
  \[
  h^*\cdot f_w\ =\ \sum_{i=1}^n w_i  x_i \partial_i f_w\,.
  \]
\end{lemma}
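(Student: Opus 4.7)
The plan is to verify the identity by a direct monomial computation, using only the defining property of the face $\calA_w$ and the product rule.

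First, I would write $f_w$ in its monomial expansion, $f_w = \sum_{a\in \calA_w} c_a x^a$. For any single exponent $a\in\NN^n$, differentiation and multiplication by $x_i$ give $x_i\partial_i x^a = a_i x^a$, so that
\[
\sum_{i=1}^n w_i\, x_i\, \partial_i x^a \ =\ \Bigl(\sum_{i=1}^n w_i a_i\Bigr)\, x^a \ =\ (w\cdot a)\, x^a.
\]
This is the standard (weighted) Euler identity at the level of a single monomial.

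Next, applying this termwise and using linearity of $\partial_i$ (note that $\partial_i f_w$ means $\partial_i(f_w)$, as declared just before the statement), I get
\[
\sum_{i=1}^n w_i\, x_i\, \partial_i f_w \ =\ \sum_{a\in\calA_w} c_a (w\cdot a)\, x^a.
\]
Finally, the defining property of the exposed face $\calA_w$ is that $w\cdot a = h^*$ for every $a\in \calA_w$. Substituting this constant value into the sum yields
\[
\sum_{i=1}^n w_i\, x_i\, \partial_i f_w \ =\ h^* \sum_{a\in\calA_w} c_a\, x^a \ =\ h^*\cdot f_w,
\]
which is the desired identity.

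There is really no obstacle: the argument is just unwinding the definitions of $f_w$ and $\calA_w$ together with the monomial-level Euler identity. The only small point worth noting is that when some $\partial_i f_w$ vanishes (equivalently, when no $a\in\calA_w$ has $a_i>0$), the corresponding term contributes $0$ to both sides, so the formula holds trivially for that index, and no appeal to Lemma~\ref{lemma_helpful} is needed.
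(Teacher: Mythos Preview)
Your proof is correct and follows essentially the same approach as the paper: both verify the identity monomial-by-monomial via $x_i\partial_i x^a=a_i x^a$, sum over $i$ to get $(w\cdot a)x^a$, and then use that $w\cdot a=h^*$ for all $a\in\calA_w$. Your additional remark about the vanishing of $\partial_i f_w$ is harmless but unnecessary, since the monomial computation already handles that case automatically.
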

\begin{proof}
  For a monomial $x^a$ with $a\in\ZZ^n$ and $1\leq i\leq n$,  we have that $x_i\partial_i x^a= a_i x^a$.
  Thus
  \[
    \sum_{i=1}^n w_i x_i \partial_i x^a\ =\
    \sum_{i=1}^n w_i a_i \  x^a\ =\ (w\cdot a) x^a\,.
  \]
  The statement follows because for $a\in\calA_w$ (the support of $f_w$), $w\cdot a= h^*$.
\end{proof}

Our proof of Theorem \ref{Thm:EmptyFaces} investigates facial systems $(\calL_{f,u})_w$ for $0\neq w\in\ZZ^{n+1}$ with the aim of showing
that for $f$ general given its support $\calA$, no facial system has a solution.
Recall from~\eqref{Lagrange_multiplier_equation} that the Lagrange multiplier equations for the Euclidean distance problem are
\[
  \calL_{f,u}(\lambda,x_1,\ldots,x_n)\ =\
    \begin{bmatrix}
       f(x_1,\ldots,x_n)\\
       \partial_1 f - \lambda(u_1-x_1)\\
       \vdots\\
       \partial_n f - \lambda(u_n-x_n)
    \end{bmatrix}\  =\ 0\,.
\]
Fix $0\neq w=(v,w_1,\ldots,w_{n})\in\ZZ^{n+1}$.
The {initial} coordinate of $w$ is $v\in\ZZ$. It has index $0$ and corresponds to the variable $\lambda$.

The first entry of the facial system $(\calL_{f,u})_{w}$ is $f_{w}$.
The shape of the remaining entries depends on $w$ as follows.
Recall from~\eqref{def_optimal_value} that
{we have set $h^* := \min\{w\cdot a\mid a\in \calA\}$ and $h^*_i := \min\{w\cdot a\mid a\in \partial_i \calA\}$.}
%
%
As $v$ and $v+w_i$ are the weights of the monomials $\lambda u_i$ and $\lambda x_i$, respectively, there are seven possibilities for
each of these remaining entries,
 \begin{equation}\label{shapes}
  \left(\partial_i f - \lambda(u_i-x_i)\right)_{w}\  =\
  \begin{cases}
   (\partial_i f)_{w}&
   \text{ if } h^*_i < \min\{v, v + w_i\}\,,
     \\[0.1em]
     (\partial_i f)_{w} - \lambda (u_i-x_i)& \text{ if } h^*_i =  v \text{ and } w_i = 0\,,
     \\[0.1em]
     (\partial_i f)_{w} - \lambda u_i& \text{ if } h^*_i = v  \text{ and } w_i > 0\,,
     \\[0.1em]
      (\partial_i f)_{w} + \lambda x_i& \text{ if } h^*_i = v + w_i \text{ and } w_i < 0\,,
     \\[0.1em]
    - \lambda (u_i-x_i)& \text{ if }h^*_i    > v \text{ and } w_i = 0\, ,
     \\[0.1em]
    - \lambda u_i& \text{ if }h^*_i  > v \text{ and } w_i > 0\, ,
     \\[0.1em]
     \lambda x_i& \text{ if }h^*_i > v + w_i \text{ and } w_i < 0\, .
 \end{cases}
\end{equation}
Note that if one of the polynomials $f_w$ or $\left(\partial_i f - \lambda(u_i-x_i)\right)_{w}$ is a
monomial, then $(\calL_{f,u})_w$ has no solutions in $(\CC^\times)^{n+1}$.

For a subset $\calI\subset\{1,\dotsc,n\}$ and a vector $u\in\CC^n$, let
$\defcolor{u_\calI}:=\{ u_i\mid i\in\calI\}$ be the components of $u$ indexed by $i\in\calI$.
We similarly write \defcolor{$w_\calI$} for $w\in\ZZ^n$ and \defcolor{$x_\calI$} for variables $x\in\CC^n$, and
write \defcolor{$\CC^\calI$} for the corresponding subspace of $\CC^n$.

We recall Theorem~\ref{Thm:EmptyFaces}, before we give a proof.
\medskip

\noindent{\bf Theorem~\ref{Thm:EmptyFaces}.}
  {\it
    Suppose that $f$ is general given its support $\calA$, that $0\in\calA$, and that $u\in\RR^n$ is general.
    For any nonzero $w\in\ZZ^{n+1}$,  the facial system $(\calL_{f,u})_w$ has no solutions in~$(\CC^\times)^{n+1}$.}\medskip

\begin{proof}
  Let $0\neq w=(v,w_1,\ldots,w_n)\in\ZZ^{n+1}$. As before, $v$ corresponds to the variable $\lambda$ and~$w_i$ to $x_i$.
  We argue by cases that depend upon $w$ and $\calA$, showing that in each case, for a general polynomial $f$ with support $\calA$, the
  facial system has no solutions in~$(\CC^\times)^{n+1}$.
  Note that the last two possibilities in~\eqref{shapes} do not occur as they give monomials.
  As $f$ {has support $\calA$}, if $\partial_if_w=0$, then $\calA_w\subset\{a\in\NN^n\mid a_i=0\}$.

  We distinguish three cases.\smallskip

\noindent{\bf Case 1 (the constant case):}
Suppose that $\partial_i f_{w} = 0$ for all $1\leq i\leq n$.
Then $f_w$ is the constant term of $f$.
Since $0\in\calA$, this is nonvanishing for $f$ general and the facial system~$(\calL_{f,u})_w$ has no solutions. \medskip

For the next two cases we may assume that there is a partition
$\defcolor{\calI} \sqcup\defcolor{\calJ} =\{1,\ldots,n\}$ with $\calI$ nonempty such that
$\partial_i f_{w} \neq 0$ for $i\in \calI$ and $\partial_j f_{w} = 0$ for $j\in\calJ$.
By Lemma \ref{lemma_helpful} we have
 \begin{equation}\label{h_for_I}
   h_i^*\ =\ h^* - w_i  \  \text{for all}\  i\in\calI\,.
 \end{equation}
 As $j\in\calJ$ implies that $\partial_j f_{w} = 0$, we see that if $a\in\calA_w$, then $a_\calJ=0$.
 This implies that {$f_w$ is a polynomial in only the variables $x_\calI$, that is, $f_w\in\CC[x_\calI]$.}\smallskip

\noindent{\bf Case 2 (triangular systems):}
Suppose that for $i\in\calI$, $w_i\geq 0$, that is, $w_{\calI}\geq 0$.
We claim that this implies $w_{\calI} = 0$.
To see this, let $a\in\calA_w$.
{As we observed,} $a_\calJ=0$.
We have
\[
  0\ \geq h^*\ =\ w\cdot a\ =\ w_{\calI} \cdot a_{\calI}\ \geq\ 0\,.
\]
Thus $h^*  = w_{\calI} \cdot a_{\calI} = 0$, which implies that $0\in\calA_w$.
Let $i\in\calI$.
Since $\partial_i f_{w} \neq 0$, there exists some $a\in \calA_w$ with $a_i>0$.
Since $w_{\calI} \cdot a_{\calI} = 0$ for all $a\in\calA_w$, we conclude that $w_i=0$.

Let $i\in\calI$.
By Lemma \ref{lemma_helpful}, we have $h_i^* = h^* - w_i$, so that $h^*_i =  h^*=0$,
and we also have~$(\partial_i f)_{w} = \partial_i f_{w}$.
As $w_i=0$, the possibilities from~\eqref{shapes} become
 \begin{equation*}
  \left(\partial_i f - \lambda(u_i-x_i)\right)_{w}\ =\
   \begin{cases}
     \partial_i f_{w}&
     \text{ if } v > 0\,, \\[0.1em]
     \partial_i f_{w} - \lambda (u_i-x_i)& \text{ if } v = 0\,,
     \\[0.1em]
     - \lambda (u_i-x_i)& \text{ if }  v< 0\,.
   \end{cases}
 \end{equation*}

We consider three subcases of $v<0$, $v>0$, and $v=0$ in turn.
Suppose first that $v<0$ and
that $(\lambda,x)\in(\CC^\times)^{n+1}$ is a solution to $(\calL_{f,u})_w$.
As $\lambda\neq 0$ and we have $\lambda(u_i-x_i)=0$ for all $i\in\calI$, we conclude that $x_{\calI} = u_{\calI}$.
Since $f_w\in\CC[x_\calI]$ is a general polynomial {with} support $\calA_w$ and $u$ is general, we do not have $f_w(u_{\calI})=0$.
Thus $(\calL_{f,u})_{w}$ has no solutions when $v<0$.\smallskip

Suppose next that $v>0$.
Then the subsystem of $(\calL_{f,u})_w=0$ involving $f_w$ and the equations indexed by $\calI$ is
 \begin{equation}\label{Eq:singular_face}
   f_w\ =\ \partial_i f_w\ =\ 0\,,\qquad\mbox{for }i\in\calI\,.
 \end{equation}
 As $f_w\in\CC[x_\calI]$, the system of equations~\eqref{Eq:singular_face} implies that the hypersurface
 $\calV_{(\CC^\times)^\calI}(f_w)\subset(\CC^\times)^\calI$ is singular.
However, since $f_w$ is general, this hypersurface must be smooth.
Thus $(\calL_{f,u})_w$ has no solutions when $v>0$.\smallskip

The third subcase of $v=0$ is more involved.
When $v=0$, the subsystem of $(\calL_{f,u})_w$ consisting of $f_w$ and the equations indexed by $\calI$ is
 \begin{equation}\label{Eq:Isubsystem}
   f_w\ =\ \partial_i f_w-\lambda(u_i-x_i)\ =\ 0
   \qquad\mbox{for }i\in\calI\,.
 \end{equation}
 As $f_w\in\CC[x_\calI]$ and $0\in\calA_w$, this is the system $(\calL_{f,u})_w$ in $\CC_\lambda\times\CC^\calI$ for the critical points of
 Euclidean distance from $u_\calI\in\CC^\calI$ to the hypersurface $\calV_{\CC^\calI}(f_w)\subset\CC^\calI$.
 Thus $(\calL_{f,u})_w$ is triangular;
 to solve it, we first solve~\eqref{Eq:Isubsystem}, and then consider the equations in
 $(\calL_{f,u})_w$ indexed by $\calJ$.

 Since $\partial_j f_w=0$ for $j\in\calJ$, the remaining equations are independent of $u_\calI$ and $f_w$.
 We will see that they are also triangular.

 {Since $h^*=0$, if $a\in\calA\smallsetminus\calA_w$, then $w\cdot a>0$.}
 Let $j\in\calJ$.
 We earlier observed that if $a\in\calA_w$ then $a_j=0$ and we defined $h^*_j$ to be the minimum $\min\{ w\cdot a\mid a\in\partial_j\calA\}$.
 Since if $a\in\partial_j\calA$, then $a+\be_j\in\calA$, we have that $a+\be_j\in\calA\smallsetminus\calA_w${.}
 {We arrive at} $w\cdot(a+\be_j)>0${, which implies} that~$w\cdot a>-w_j$.
 Taking the minimum over $a\in\partial_j\calA$ implies that $h^*_j>-w_j$.

 Consider now the members of the facial system $(\calL_{f,u})_w$ indexed by $j\in\calJ$.
 Since $v=0$ and~$h^*_j>-w_j$, the second and fourth possibilities for $(\partial_jf-\lambda(u_j-x_j))_w$ in~\eqref{shapes} do not occur.
 Recall that the last two possibilities also do not occur.
 As $v=0$, we have three cases
 \begin{equation}\label{Eq:Newshapes}
  \left(\partial_j f - \lambda(u_j-x_j)\right)_w\  = \
  \begin{cases}
   (\partial_j f)_w&
   \text{ if } h^*_j < \min\{0, w_j\}\,,
     \\[0.1em]
     (\partial_j f)_w - \lambda u_j& \text{ if } h^*_j = 0  \text{ and } w_j > 0\,,
     \\[0.1em]
    - \lambda (u_j-x_j)& \text{ if }h^*_j    > 0 \text{ and } w_j = 0\, .
 \end{cases}
\end{equation}
If the first case holds for some $j\in\calJ$, then as $h^*_j>-w_j$, we have $w_j>0$.
Since $w_j\geq 0$ in the other cases, we have $w_j\geq 0$ for all $j\in \calJ$.
As we showed earlier that $w_\calI=0$, we have  $w\geq 0$.
But then as $\partial_j\calA\subset\NN^n$, we have $h^*_j\geq 0$ for all $j\in\calJ$.
In particular, the first case in~\eqref{Eq:Newshapes}---in which $h^*_j<0$---does not occur.
Thus the only possibilities for the~$j$th component of $(\calL_{f,u})_w$ are the second or the third cases in~\eqref{Eq:Newshapes}, so that
$w_\calJ\geq 0$.

Let us further partition $\calJ$ according to the vanishing of $w_j$,
\[
  \calK\ :=\ \{k\in\calJ\mid w_k=0\}
   \qquad\mbox{and}\qquad
  \calM\ :=\ \{m\in\calJ\mid w_m>0\}\,.
\]
Every component of $w_\calM$ is positive and $w_\calI=w_\calK=0$. Moreover, the second entry in \eqref{Eq:Newshapes} shows that $h^*_m=0$ for all $m\in\calM$. We conclude from this
that no variable in~$x_\calM$ occurs in~$(\partial_m f)_w$, for any $m\in\calM$.

Let us now consider solving $(\calL_{f,u})_w$, using triangularity.
Let \defcolor{$(\lambda,x_\calI)$} be a solution to the subsystem~\eqref{Eq:Isubsystem} for critical points of the
Euclidean distance from $u_\calI$ to $\calV_{\CC^\calI}(f_w)$ in $\CC^\calI$.
We may assume that $\lambda\neq 0$ as $f_w$ is general.
Then the subsystem corresponding to $\calK$ gives $x_k=u_k$ for $k\in\calK$.
Let $m\in\calM$.
Since $(\partial_m f)_w$ only involves $x_\calI$ and $x_\calK$, substituting these values into $(\partial_m f)_w$
gives a constant, which cannot be equal to $\lambda u_m$ for general $u_m\in\CC$.
As $w\neq 0$, we cannot have $\calM=\emptyset$, so this last case occurs.
Thus $(\calL_{f,u})_w$ has no solutions when $v=0$.\medskip

\noindent{\bf Case 3 (using the Euler formula):}
Let us now consider the case where there is some index $i\in\calI$ with $w_i<0$ and suppose that the facial system $(\calL_{f,u})_w$
has a solution.
Let $i\in\calI$ be {such} an index with $w_i<0$.
As the facial system has a solution, the last possibility in~\eqref{shapes} for~$(\partial_i f-\lambda(u_i-x_i))_w$ does not occur.
Thus either first or the fourth possibility occurs.
Hence, $h_i^*\leq w_i+v<v$, as $w_i<0$.
By~\eqref{h_for_I}, we have $h^*=h^*_i+w_i\leq 2w_i+v < v$.

For any $i\in\calI$, we have $h^*_i=h^*-w_i<v-w_i$, by~\eqref{h_for_I}.
Thus if $w_i\geq 0$, then  $h^*_i<v$.
As we obtained the same inequality when $w_i<0$, we conclude that for all $i\in\calI$ we have $h_i^*<v$.
Thus only the first or the fourth possibility in~\eqref{shapes} occurs for $i\in\calI$.
That is,
 \begin{equation}\label{proof_eq1}
   (\partial_i f - \lambda(u_i-x_i))_{w}
    =
    \begin{cases}
      \partial_i f_{w} &
       \text{ if } h^* - w_i < \min\{v, w_i + v\} \,,
   \\[0.1em]
   \partial_i f_{w} + \lambda x_i& \text{ if } h^* =  2w_i + v \mbox{ and } w_i<0\,.
  \end{cases}
 \end{equation}
These cases further partition $\calI$  into sets $\calK$ and $\calM$, where
\begin{align*}
   \defcolor{\calK}{\ } &:=\
    \{k\in\calI\mid  h^* - w_k < \min\{v, w_k + v\} \}\,\; \mbox{and}\\
   \defcolor{\calM}\ &:=\
   \{m\in\calI\mid  h^* =  2w_m + v \mbox{ and } w_m<0\}\, .
\end{align*}
For $k\in\calK$ the corresponding equation in $(\calL_{f,u})_w=0$  is $\partial_k f_w=0$ and for $m\in\calM$ it is~$\partial_m f_{w} + \lambda x_m=0$.
If $\calM=\emptyset$, then $\calK=\calI$ and the subsystem of $(\calL_{f,u})_w$ consisting of $f_w$ and the equations indexed by
$\calI$ is~\eqref{Eq:singular_face}, which has no solutions as we already observed.

Now suppose that $\calM\neq \emptyset$.
Define $\defcolor{w^*}:=\min\{w_i\mid i\in\calI\}$.
Then $w^*<0$.
Moreover, by~\eqref{proof_eq1} we have that if $m\in\calM$, then $w_m=\frac{1}{2}(h^*-v)$.
Thus, $w_m=w^*$ for every $m\in\calM$.
Suppose that $(\lambda,x)$ is a solution to $(\calL_{f,u})_w$.
For $k\in\calK$, we have $\partial_k f_w(x)=0$ and for $m\in\calM$, we have that~$\partial_m f_{w}(x)=-\lambda x_m$.
Then by Lemma~\ref{lemma_derivative}, we get
 \[
   0\ =\ h^* f_w(x)\ =\ \sum_{i\in\calI} w_i \, x_i \, \partial_i f_w(x)\ =\ -\lambda w^*\sum_{m\in\calM} x_m^2 \ .
 \]
 The last equality uses that $\calI=\calK\sqcup\calM$.
 Since $\lambda\neq 0$ and $w^*\neq 0$, we have $\sum_{m\in\calM} x_m^2= 0$.
 Let \defcolor{$Q$} be this quadratic form.
 Then the point $x_\calI$ lies on both hypersurfaces $\calV(f_w)$ and $\calV(Q)$.
 Since $\partial_k f_w(x_\calI)=\partial_k Q=0$ for $k\in\calK$ and
 $2\partial_m f_w(x_\calI)=\lambda\partial_m Q$ for $m\in\calM$, we see that the two hypersurfaces meet non-transversely at $x_\calI$.
 But this contradicts $f_w$ being general.
 Thus there are no solutions to $(\calL_{f,u})_{w}=0$ in this last case.

 This completes the proof of Theorem~\ref{Thm:EmptyFaces}.
\end{proof}

\section{The Euclidean Distance Degree of a rectangular parallelepiped}\label{sec:rectangular_parallelepiped}
Let $a=(a_1,\dotsc,a_n)$ be a vector of nonnegative integers and recall from~\eqref{def_box}  the definition of the rectangular
parallelepiped:
\[
B(a)\ =\ [0,a_1]\times \dotsb\times [0,a_n]\ \subset\  \RR^n\,.
\]
We consider the {Euclidean Distance Degree} of a general polynomial whose Newton polytope is $B(a)$, with the goal of
proving Theorem~\ref{Th:EDD_of_box}.
{We consider polytopes in $\RR^n$, such as $B(a)$, as polytopes in $\RR^{n+1}$, using the identification of $\RR^n$ with
  $\{0\}\times\RR^n\subset\RR^{n+1}$.}

Recall that ${\be_i}:=(0,\dotsc,1,\dotsc,0)$ is the $i$th standard unit vector in $\RR^n$
(the unique 1 is in the $i$th position).
The $0$-th unit vector $\be_0$ corresponds to the variable $\lambda$.
Let $f$ be a general polynomial with Newton polytope $B(a)$.
Then the Newton polytope of the partial derivative~$\partial_i f$ is $B(a_1,\dotsc,a_i{-}1,\dotsc,a_n)$.

For each $1\leq i\leq n$, let $\defcolor{P_i(a)}{\subset\RR^{n+1}}$ be the convex hull of $B(a_1,\dotsc,a_i{-}1,\dotsc,a_n)$
and the two points $\be_0$ and $\be_0+\be_i$.
Then $P_i(a)$ is the Newton polytope of $\partial_i f - \lambda(u_i-x_i)$.
Consequently, $B(a),P_1(a),\dotsc,P_n(a)$ are the Newton polytopes of the Lagrange multiplier equations~\eqref{Lagrange_multiplier_equation}.

Recall that for each $1\leq k\leq n$, $e_k(a)$ is the elementary symmetric polynomial of degree $k$  evaluated at $a$.
It is the sum of all square-free monomials in $a_1,\dotsc,a_n$.
Let us write
 \begin{equation}\label{Eq:Ea}
   \defcolor{E(a)}\ := \sum_{k=1}^n  k! \,e_k(a)\,.
 \end{equation}
The main result in this section is the following mixed volume computation.
It and Theorem~\ref{Th:EDD=MV} together imply Theorem~\ref{Th:EDD_of_box}.

\begin{theorem}\label{Th:MV_for_box}
  With these definitions, $\MV(B(a),P_1(a),\dotsc,P_n(a)) = E(a)$.
\end{theorem}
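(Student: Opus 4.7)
The plan is to reduce the mixed volume in $\RR^{n+1}$ to a mixed volume in $\RR^n$ by multilinearity and the segment-projection formula, and then to induct on $n$. Since $B(a) = \sum_{i=1}^n a_i[0,\be_i]$ as a Minkowski sum, multilinearity of mixed volume gives
\[
\MV(B(a),P_1(a),\dotsc,P_n(a)) \;=\; \sum_{i=1}^n a_i\,\MV([0,\be_i],P_1(a),\dotsc,P_n(a)).
\]
The mixed volume with a primitive lattice segment $[0,\be_i]$ equals the mixed volume of the projections $\pi_i$ that forget the $x_i$-coordinate. For $j\neq i$ one checks that $\pi_i P_j(a) = P_j(a^{(i)})$ (the analogous polytope with the $i$th entry of $a$ dropped), while $\pi_i P_i(a) = \conv(B(a^{(i)}),\be_0) = \Pyr(B(a^{(i)}))$, since $\be_0+\be_i$ projects onto $\be_0$. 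Writing $N(\hat a) := \MV(\Pyr(B(\hat a)),P_1(\hat a),\dotsc,P_{n-1}(\hat a))$, this yields the recursion
\[
\MV(B(a),P_1(a),\dotsc,P_n(a)) \;=\; \sum_{i=1}^n a_i\,N(a^{(i)}).
\]

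The key lemma to establish is $N(\hat a) = 1 + E(\hat a)$. By BKK, $N(\hat a)$ counts the zeros in $(\CC^\times)^n$ of a generic system $F(\lambda,x)=\alpha\lambda + g(x)$ of Newton polytope $\Pyr(B(\hat a))$, together with generic $H_j(\lambda,x)$ of Newton polytope $P_j(\hat a)$ for $j=1,\dotsc,n-1$. Eliminating $\lambda$ via $\lambda = -g(x)/\alpha$ produces $n-1$ equations in $x$ of the shape $\alpha\,p_j^0(x) - g(x)\,p_j^1(x)=0$, where $p_j^0$ and $p_j^1$ are the $\lambda=0$ and $\lambda=1$ slices of $H_j$. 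Each such equation has Newton polytope $B(\hat a)+[0,\be_j] = B(\hat a+\be_j)$. Running a case analysis on weights $w\in\ZZ^{n-1}$ parallel to Theorem~\ref{Thm:EmptyFaces} --- this is where the real work lies --- shows that the facial systems of the substituted system have no torus solutions, so Bernstein's Other Theorem (Proposition~\ref{P:Bernstein}) gives $N(\hat a) = \MV(B(\hat a+\be_1),\dotsc,B(\hat a+\be_{n-1}))$ in $\RR^{n-1}$.

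The right-hand side is a pure combinatorial computation: expanding each $B(\hat a+\be_j)=B(\hat a)+[0,\be_j]$ multilinearly and then each copy of $B(\hat a)$ as $\sum_k\hat a_k[0,\be_k]$, the only nonvanishing summands correspond to unit segments whose indices form a permutation of $[n-1]$, since $\MV([0,\be_{j_1}],\dotsc,[0,\be_{j_{n-1}}])$ in $\RR^{n-1}$ equals $1$ when the $j_i$ are distinct and $0$ otherwise. This gives
\[
N(\hat a) \;=\; \sum_{T\subseteq [n-1]} |T|!\prod_{i\in T}\hat a_i \;=\; \sum_{k=0}^{n-1}k!\,e_k(\hat a) \;=\; 1+E(\hat a).
\]
Feeding this into the recursion and applying the elementary identity $\sum_i a_i\,e_k(a^{(i)}) = (k+1)\,e_{k+1}(a)$ closes the induction, yielding $\MV(B(a),P_1(a),\dotsc,P_n(a)) = e_1(a) + \sum_{k\geq 1}(k+1)!\,e_{k+1}(a) = \sum_{k=1}^n k!\,e_k(a) = E(a)$. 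The main obstacle is the empty-facial-system verification for the substituted system, a Bernstein's-Other-Theorem-style case analysis analogous to but distinct from the paper's Section~\ref{S:EDD=MV}.
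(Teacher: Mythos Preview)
Your proposal is correct and follows essentially the same route as the paper: expand $B(a)$ by multiadditivity into segments, apply the segment-projection formula (the paper's Lemma~\ref{L:intervals}) together with the projection identities $\pi_j P_i(a)=P_i(\pi_j(a))$ and $\pi_j P_j(a)=\Pyr(\pi_j(a))$ (the paper's Lemma~\ref{L:Projections}), reduce to the pyramid mixed volume $N(\hat a)=1+E(\hat a)$ (the paper's Lemma~\ref{L:MV_Pyramids}), and close with the identity $\sum_i a_i e_k(a^{(i)})=(k{+}1)e_{k+1}(a)$. The only substantive difference is in the facial-system verification for the substituted system: the paper does not need anything as heavy as the analysis in Section~\ref{S:EDD=MV}, because the Newton polytopes involved are boxes $B(\hat a)+[0,\be_j]$ whose faces are indexed by sign vectors $w\in\{-1,0,1\}^{n-1}$, and the case analysis reduces to a short counting argument (how many equations versus how many effective variables) rather than the Euler-formula machinery. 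Also, note that no induction on $n$ is actually needed---the reduction is a single step, not a recursive one.
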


Our proof of Theorem~\ref{Th:MV_for_box} occupies Section~\ref{SS:Proof}, and it depends upon lemmas and definitions collected in
Sections~\ref{SS:Defs} and \ref{sec:pyramids}.
One technical lemma from Section~\ref{sec:pyramids} is proven in~Section~\ref{SS:Pyramids}.

\subsection{Mixed volumes}\label{SS:Defs}

Let $m$ be a positive integer.
The \demph{Minkowski sum} of two polytopes $P,Q$ in $\RR^m$ is the sum of all pairs of points, one from each of $P$ and $Q$,
\[
   \defcolor{P+Q}\ :=\ \{p+q\mid p\in P\ \mbox{ and }\ q\in Q\}\,.
\]
Let $m$ be a positive integer.
As explained in~\cite[Sect.~IV.3]{Ewald},
mixed volume is a nonnegative function $\MV(Q_1,\dotsc,Q_m)$ of polytopes $Q_1,\dotsc,Q_m$ in $\RR^m$ that is characterized by three
properties: \smallskip

{\bf Normalization.} If $Q_1=\dotsb=Q_m=Q$, and $\Vol(Q)$ is the Euclidean volume of $Q$, then
\[
\MV(Q_1,\dotsc,Q_m)\ =\ m!\Vol(Q)\,.
\]

{\bf Symmetry.} If $\sigma$ is a permutation of $\{1,\dotsc,m\}$, then
\[
\MV(Q_1,\dotsc,Q_m)\ =\ \MV(Q_{\sigma(1)},\dotsc,Q_{\sigma(m)})\,.
\]

{\bf Multiadditivity.} If $Q'_1$ is another polytope in $\RR^m$, then
\[
    \MV(Q_1+Q'_1,Q_2,\dotsc,Q_m)\ =\ \MV(Q_1,Q_2,\dotsc,Q_m)\ +\ \MV(Q'_1,Q_2,\dotsc,Q_m)\,.
\]

Mixed volume decomposes as a product when the polytopes possess a certain triangularity
(see~\cite[Lem.~6]{ThSt} or~\cite[Thm.~1.10]{Esterov}).
We use a special case.
For a positive integer~$b$, write \defcolor{$[0,b\,\be_i]$} for the interval of length $b$ along the $i$th axis in~$\RR^{m}$.
For each $1\leq j\leq m$, let $\defcolor{\pi_j}\colon\RR^{m}\to\RR^{m-1}$ be the projection along the coordinate direction~$j$.

\begin{lemma}\label{L:intervals}
  Let $Q_1,\dotsc,Q_{m-1}\subset\RR^m$ be polytopes, $b$ be a positive integer, and $1\leq j\leq m$.
  Then
  \[
  \MV(Q_1,\dotsc,Q_{m-1},[0,b\,\be_j])\ =\ b\,\MV(\pi_j(Q_1),\dotsc,\pi_j(Q_{m-1}))\,.
  \]
\end{lemma}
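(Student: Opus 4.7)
The plan is to reduce to the case $b=1$ by multiadditivity, and then invoke the characterization of mixed volume through its three defining properties (normalization, symmetry, multiadditivity). Since $[0,b\,\be_j]$ is the $b$-fold Minkowski sum of $[0,\be_j]$, applying multiadditivity in the last slot gives $\MV(Q_1,\dotsc,Q_{m-1},[0,b\,\be_j]) = b\cdot \MV(Q_1,\dotsc,Q_{m-1},[0,\be_j])$, so it suffices to handle $b=1$.

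Next, I would consider the two functions of the tuple $(Q_1,\dotsc,Q_{m-1})$ defined by $\Phi(Q_1,\dotsc,Q_{m-1}) := \MV(Q_1,\dotsc,Q_{m-1},[0,\be_j])$ and $\Psi(Q_1,\dotsc,Q_{m-1}) := \MV(\pi_j(Q_1),\dotsc,\pi_j(Q_{m-1}))$. Both are symmetric and multiadditive in the $Q_i$: for $\Phi$ this is inherited directly from mixed volume in $\RR^m$, while for $\Psi$ it follows from the same properties of mixed volume in $\RR^{m-1}$ combined with the fact that the linear projection $\pi_j$ commutes with Minkowski sums and positive dilations. By the uniqueness of any function characterized by symmetry, multiadditivity, and its values on the diagonal, it suffices to check that $\Phi(Q,\dotsc,Q) = \Psi(Q,\dotsc,Q)$ for every polytope $Q\subset\RR^m$; applying the normalization axiom on the right, this becomes the identity $\MV(\underbrace{Q,\dotsc,Q}_{m-1},[0,\be_j]) = (m-1)!\,\Vol_{m-1}(\pi_j(Q))$.

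For this diagonal identity I would use a slicing argument. Describing any convex body $R\subset\RR^m$ by its upper and lower envelopes in the $\be_j$-direction shows that $R + t[0,\be_j]$ is obtained by shifting the top envelope upward by $t$, yielding the Cavalieri identity $\Vol_m(R + t[0,\be_j]) = \Vol_m(R) + t\,\Vol_{m-1}(\pi_j(R))$. Applied with $R = \lambda Q$ this produces a closed-form polynomial in $\lambda$ and $t$. Comparing this with the expansion of $\Vol_m(\lambda Q + t[0,\be_j])$ in $\lambda,t$ coming from the defining multilinearity of mixed volume, and reading off the coefficient of $\lambda^{m-1}t$, yields exactly the required identity.

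I expect the only real content to be the slicing identity for Minkowski sums with an axis-aligned segment; everything else is then bookkeeping with the three characterizing properties of mixed volume. A minor technical point to watch is that $\pi_j$ can collapse dimensions, in which case both sides vanish and the statement holds trivially, so this case can be disposed of before invoking the characterization.
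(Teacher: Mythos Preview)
Your argument is correct but follows a genuinely different route from the paper's. The paper gives a short algebraic proof via Bernstein's Theorem: it interprets the left-hand side as the number of common zeros of a general system $g_1,\dotsc,g_m$ with the given Newton polytopes, observes that $g_m$ is a degree-$b$ univariate polynomial in $x_j$ with $b$ roots, and notes that substituting each root into $g_1,\dotsc,g_{m-1}$ yields a general system with Newton polytopes $\pi_j(Q_1),\dotsc,\pi_j(Q_{m-1})$, contributing $\MV(\pi_j(Q_1),\dotsc,\pi_j(Q_{m-1}))$ solutions each. Your proof, by contrast, stays entirely within convex geometry: after reducing to $b=1$ by multiadditivity, you use polarization (symmetric multiadditive functions are determined by their diagonal) to reduce to showing $\MV(Q,\dotsc,Q,[0,\be_j])=(m-1)!\,\Vol_{m-1}(\pi_j(Q))$, and you obtain this by reading off the $\lambda^{m-1}t$ coefficient in the Cavalieri identity $\Vol_m(\lambda Q+t[0,\be_j])=\lambda^m\Vol_m(Q)+t\lambda^{m-1}\Vol_{m-1}(\pi_j(Q))$. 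The paper's approach is quicker and in keeping with its overall strategy of exploiting Bernstein, while yours has the virtue of being self-contained and independent of the BKK machinery; the one place you could be more explicit is the polarization step, since the paper's stated characterization of mixed volume is not literally what you invoke (you need the general fact that any symmetric Minkowski-multiadditive real-valued function is recovered from its diagonal, not the specific uniqueness of mixed volume).
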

\begin{proof}
 We paraphrase the proof in~\cite{Esterov}, which is bijective and algebraic.
 Consider a system $g_1,\dotsc,g_m$ of general polynomials with Newton polytopes $Q_1,\dotsc,Q_{m-1},[0,b\,\be_j]$, respectively.
 As $g_m$ is a univariate polynomial of degree $b$ in $x_j$, $g_m(x_j)=0$ has $b$ solutions.
 For each solution $x^*_j$, if we substitute $x_j=x^*_j$ in $g_1,\dotsc,g_{m-1}$, then we obtain general polynomials with Newton polytopes
 $\pi_j(Q_1),\dotsc,\pi_j(Q_{m-1})$.
 Thus there are $\MV(\pi_j(Q_1),\dotsc,\pi_j(Q_{m-1}))$ solutions to our original system for each of the $b$ solutions to $g_m(x_j)=0$.
\end{proof}

%
\subsection{Pyramids}\label{sec:pyramids}
Let $1\leq m\leq n$ and $a=(a_1,\dotsc,a_m)$ be a vector of positive integers.
The \demph{small rectangular parallelepiped} is $\defcolor{B(a)}:=[0,a_1]\times\cdots\times [0,a_m]$.
It is the Minkowski sum of intervals:
\[
B(a)\ =\ [0,a_1\be_1]+\dotsb+[0,a_k\be_m]\,.
\]
Its Euclidean volume is $a_1\dotsb a_m$, the product of its side lengths.
{This is embedded in $\RR^{m+1}$ as $\{0\}\times B(a)$.}

As before, $\defcolor{P_i(a)}$ is the convex hull of $B(a_1,\dotsc,a_i{-}1,\dotsc,a_m)$ and~$\be_0+[0,\be_i]$.
Define $\defcolor{\Pyr(a)}$ to be the pyramid with base the rectangular parallelepiped $B(a)$ and apex~$\be_0$, this is the convex hull of
$B(a)$ and $\be_0$.
For each $j=1,\ldots,m$ we have the projection $\pi_j:\mathbb R^{m}\to \mathbb R^{m-1}$ along the $j$th coordinate,
so that $\pi_j(a)=(a_1,\dotsc,a_{j-1}\,,\,a_{j+1},\dotsc,a_m)$.
We then have that $\pi_j(B(a))=B(\pi_j(a))$.
The following is immediate from the definitions.

\begin{lemma}\label{L:Projections}
 Let $a=(a_1,\dotsc,a_m)$ and $1\leq i,j\leq m$.
  Then we have
  \[
    \pi_j(P_i(a)) \mbox{\ }=\ \left\{\begin{array}{rcl} P_i(\pi_j(a)) &\ &\mbox{if }\; i\neq j\\
                                                     \Pyr(\pi_j(a)) &\ &\mbox{if }\; i = j \rule{0pt}{14pt}
                                    \end{array}\right. .
   \]
\end{lemma}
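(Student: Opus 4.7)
The plan is to unpack the definitions and then use the fact that projection commutes with convex hull, so that $\pi_j(\conv(S\cup T))=\conv(\pi_j(S)\cup \pi_j(T))$ for any $S,T\subset\RR^{m+1}$. By definition, $P_i(a)$ is the convex hull of two sets: the ``base'' $\{0\}\times B(a_1,\dotsc,a_i{-}1,\dotsc,a_m)\subset\RR^{m+1}$, which lies in the hyperplane $\lambda=0$, and the ``edge'' $\be_0+[0,\be_i]$. So it suffices to compute the $\pi_j$-image of each of these two pieces and take their convex hull.

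For the case $i\neq j$, I would observe two things. First, projecting a coordinate-aligned box along a coordinate direction just deletes the corresponding factor, so $\pi_j$ sends $\{0\}\times B(a_1,\dotsc,a_i{-}1,\dotsc,a_m)$ to the box obtained from $\pi_j(a)$ by decreasing its entry corresponding to index $i$ by one; this is exactly the base of $P_i(\pi_j(a))$. Second, the edge $\be_0+[0,\be_i]$ is nonzero only in coordinates $0$ and $i$; since $j\neq 0$ and $j\neq i$, the projection $\pi_j$ leaves it unchanged (interpreted in the lower-dimensional ambient space). Taking the convex hull of these two images then gives exactly $P_i(\pi_j(a))$.

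For the case $i=j$, the key point is that the edge $\be_0+[0,\be_i]$ varies only in direction $\be_i$ (apart from the constant $\be_0$), so contracting that direction collapses it to the single point $\be_0$. The base $\{0\}\times B(a_1,\dotsc,a_i{-}1,\dotsc,a_m)$ projects under $\pi_i$ to $\{0\}\times B(\pi_i(a))$, because the $i$-th interval $[0,a_i{-}1]$ is the one being deleted. Hence $\pi_i(P_i(a))$ is the convex hull of the base $B(\pi_i(a))$ (in the hyperplane $\lambda=0$ of $\RR^m$) together with the apex $\be_0$, which is precisely $\Pyr(\pi_i(a))$.

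I do not anticipate any real obstacle; the whole argument is bookkeeping with indices, and the only thing to watch is the convention that $\pi_j$ drops the coordinate indexed by $j$ regardless of the ambient dimension, so that $\pi_j$ applied to polytopes in $\RR^{m+1}$ produces polytopes in $\RR^m$ whose coordinates are $(\lambda,x_1,\dotsc,\widehat{x_j},\dotsc,x_m)$.
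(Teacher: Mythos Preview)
Your argument is correct and is exactly what the paper has in mind: it states that the lemma ``is immediate from the definitions'' and gives no further proof, and your unpacking via $\pi_j(\conv(S\cup T))=\conv(\pi_j(S)\cup\pi_j(T))$ together with the coordinate-wise description of the base box and the edge $\be_0+[0,\be_i]$ is precisely the intended verification.
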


We now have the following lemma.
{Recall the definition~\eqref{Eq:Ea} of $E(a)$.}

\begin{lemma}\label{L:MV_Pyramids}
  We have $\MV(\Pyr(a),P_1(a),\dotsc,P_m(a))\ =\ 1 + E(a)$.
\end{lemma}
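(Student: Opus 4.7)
The plan is to prove Lemma~\ref{L:MV_Pyramids} by induction on $m$, using Lemmas~\ref{L:intervals} and~\ref{L:Projections} together with a Minkowski decomposition of $P_j(a)$. The base case $m=1$ is a direct two-dimensional computation: $\Pyr(a_1)$ is a triangle and $P_1(a_1)$ is a trapezoid in $\RR^2$, and the planar identity $\MV(P,Q)=\Vol(P+Q)-\Vol(P)-\Vol(Q)$ gives $\MV(\Pyr(a_1),P_1(a_1))=1+a_1$, as expected.

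For the inductive step I would first verify the Minkowski identity
\[
 P_j(a)\ =\ \Pyr(a-2\be_j)+[0,\be_j]\qquad\text{(valid for $a_j\geq 2$),}
\]
for example by checking that the two sides share the same height-$\lambda$ slice $\prod_{i\neq j}[0,(1-\lambda)a_i]\times[0,(a_j-1)-(a_j-2)\lambda]$ for each $\lambda\in[0,1]$.  Because both sides of Lemma~\ref{L:MV_Pyramids} are polynomial in $a$, one may reduce to the case $a_j\geq 2$ for every $j$ and extend by continuity at the end.  Applying the decomposition for $j=1$ and using multiadditivity of mixed volume splits the quantity of interest as
\[
 \MV(\Pyr(a),P_1(a),\dots,P_m(a))\ =\ A\ +\ B,
\]
with $B=\MV(\Pyr(a),[0,\be_1],P_2(a),\dots,P_m(a))$ and $A=\MV(\Pyr(a),\Pyr(a-2\be_1),P_2(a),\dots,P_m(a))$.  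Lemma~\ref{L:intervals} together with Lemma~\ref{L:Projections} reduces $B$ to $\MV_m(\Pyr(\pi_1 a),P_2(\pi_1 a),\dots,P_m(\pi_1 a))$, which by the inductive hypothesis equals $1+E(\pi_1 a)$.

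The main obstacle will be the residual term $A$, which contains two pyramids and does not directly match the inductive pattern.  I would iterate the same Minkowski decomposition on $P_2(a),\dots,P_m(a)$ inside $A$ and expand by multiadditivity into a sum, indexed by subsets $S\subseteq\{2,\dots,m\}$, of mixed volumes whose slots contain $\Pyr(a)$, the pyramids $\Pyr(a-2\be_i)$ for $i\notin S$, and the intervals $[0,\be_i]$ for $i\in S$.  Each such term, after using Lemma~\ref{L:intervals} to project out the coordinates in $S$, becomes a mixed volume of pyramids over rectangular parallelepipeds in $\RR^{m-|S|+1}$, which can be evaluated directly from the uniform slice structure $(1-\lambda)B(\cdot)$ enjoyed by pyramids with common apex $\be_0$ and from the Minkowski-sum volume formula for boxes.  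Summing the contributions and simplifying via the combinatorial identity
\[
 \sum_{\sigma\in S_m}\prod_{i=1}^m(a_{\sigma(i)}+\delta_{i,\sigma(i)})\ =\ \sum_{k=0}^m k!\,e_k(a)\ =\ 1+E(a),
\]
obtained by sorting permutations by their set of fixed points, should deliver the desired value $1+E(a)$ and close the induction.
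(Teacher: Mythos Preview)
Your decomposition $P_j(a)=\Pyr(a-2\be_j)+[0,\be_j]$ is correct (the slice check you outline works), and the $B$ term is handled cleanly by Lemma~\ref{L:intervals}, Lemma~\ref{L:Projections}, and the inductive hypothesis. The approach is genuinely different from the paper's, which is algebraic: the paper passes through Bernstein's Theorem, writes down a general system with these Newton polytopes, eliminates $\lambda$ to obtain a system whose Newton polytopes are the boxes $B(a)+[0,\be_i]$, computes \emph{that} mixed volume as $1+E(a)$ by multiadditivity, and then checks all facial systems to certify equality. No pyramids beyond the original ones ever appear.

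The gap in your plan is the evaluation of the residual terms. After your full expansion and projection you are left with mixed volumes of the form
\[
   \MV\bigl(\Pyr(a_T),\ \{\Pyr(a_T-2\be_j)\}_{j\in T}\bigr)\qquad\text{in }\RR^{|T|+1},
\]
and you assert these ``can be evaluated directly from the uniform slice structure $(1-\lambda)B(\cdot)$\dots and from the Minkowski-sum volume formula for boxes.'' That step is not justified and is not routine. There is no slice-by-slice formula for mixed volume: already in $\RR^2$ one has $\MV(\Pyr(b_0),\Pyr(b_1))=\max(b_0,b_1)$, which is not polynomial in $(b_0,b_1)$, so pyramids with common apex do not admit the kind of clean formula your sentence suggests. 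What you actually need is the specific identity
\[
   \MV\bigl(\Pyr(a_T),\ \{\Pyr(a_T-2\be_j)\}_{j\in T}\bigr)\ =\ |T|!\prod_{j\in T}a_j\ =\ (|T|{+}1)!\,\Vol\bigl(\Pyr(a_T)\bigr),
\]
i.e.\ that shrinking each of the last $|T|$ arguments from $\Pyr(a_T)$ to the strictly smaller $\Pyr(a_T-2\be_j)$ does \emph{not} decrease the mixed volume. Monotonicity gives only the upper bound; the matching lower bound is exactly the substantive content of the lemma, and proving it (say, by writing down polynomials and checking facial systems, or by a careful support-function argument) is essentially as much work as the paper's direct proof. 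Your combinatorial identity is correct and would close the argument once this pyramid mixed volume is established, but as written the proposal defers the main difficulty to an unproved claim.

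A minor point: your appeal to ``both sides are polynomial in $a$'' to reduce to $a_j\geq 2$ is fine for the statement of the lemma itself (the combinatorial types of $\Pyr(a)$ and $P_i(a)$ are constant for $a_j\geq 1$, so the mixed volume is indeed polynomial there), but note that the intermediate quantities in your expansion are not individually polynomial in general, so you should be careful to apply the polynomiality argument only to the final assembled identity.
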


{We prove this} in Section~\ref{SS:Pyramids}.

\subsection{Proof of Theorem~\ref{Th:MV_for_box}}\label{SS:Proof}

Since $B(a)$ is the Minkowski sum of the intervals $[0,a_i\be_i]$ for $1\leq i\leq n$, multiadditivity and Lemma~\ref{L:intervals}
give
\begin{align}
   \MV(B(a),P_1(a),\dotsc,P_n(a))\mbox{\ }&=\                 \label{Eq:MixedVol}    %
   \sum_{j=1}^n \MV([0,a_j\be_j], P_1(a),\dotsc,P_n(a))\\
   &=\  \sum_{j=1}^n a_j \MV( \pi_j(P_1(a)),\dotsc,\pi_j(P_n(a)))\,.   \nonumber%
\end{align}
By Lemma~\ref{L:Projections}, the $j$th term is
\[
 a_j \MV( P_1(\pi_j(a)),\dotsc,P_{j-1}(\pi_j(a))\,,\, \Pyr(\pi_j(a))\,,\, P_{j+1}(\pi_j(a)),\dotsc,P_n(\pi_j(a)))\,.
\]
Applying symmetry and Lemma~\ref{L:MV_Pyramids} with $m=n{-}1$, this is $a_j(1+E(\pi_j(a)))${, where $E(\cdot)$ is defined in~\eqref{Eq:Ea}}.
Thus the mixed volume~\eqref{Eq:MixedVol}  is
\[
  e_1(a)\ +\ \sum_{k=1}^{n-1} k!\ \sum_{j=1}^n a_j e_k(\pi_j(a))
  \ = E(a)\,.
\]
The equality {in this formula} follows from the identity,
 \[
  \sum_{j=1}^n a_j e_k(\pi_j(a))\ =\
  \sum_{j=1}^n a_j e_k(a_1,\dotsc,a_{j-1}\,,\, a_{j+1},\dotsc, a_n)\ =\
   (k{+}1) e_{k+1}(a)\,.
\]
This finishes the proof of Theorem~\ref{Th:MV_for_box}.\qed
%
%

\subsection{Proof of Lemma~\ref{L:MV_Pyramids}}\label{SS:Pyramids}

We use Bernstein's Theorem {to show} that a general polynomial system with support
$\Pyr(a),P_1(a),\dotsc,P_m(a)$ has $1+E(a)$ solutions in the torus $(\CC^\times)^{m+1}$, where $a=(a_1,\dotsc,a_m)$ is a vector of positive integers.

A general polynomial with Newton polytope $\Pyr(a)$ has the form $c\lambda+f$, where $f$ has Newton polytope $B(a)$ and $c\neq 0$.
Here, $\lambda$ is a variable with exponent $\be_0$.
Dividing by~$c$, we may assume that the polynomial is monic in $\lambda$.
Similarly, as $P_i(a)$ is the convex hull of $B(a_1,\dotsc,a_i{-}1,\dotsc,a_m)$ and~$\be_0+[0,\be_i]$, a general polynomial with support
$P_i(a)$ may be assumed to have the form $\lambda\ell_i(x_i) + f_i(x)$, where $f_i$ has Newton polytope $B(a_1,\dotsc,a_i{-}1,\dotsc,a_m)$ and
$\defcolor{\ell_i(x_i)}:=c_i+x_i$ is a linear polynomial in $x_i$ with $c_i\neq 0$.

We may therefore assume that a general system of polynomials with the given support has the form
  \begin{equation}\label{Eq:pyramidPolys}
    \lambda-f\,,\ \,
    \lambda\ell_1(x_1)+f_1\,,\ \dotsc\,,\  \lambda\ell_m(x_m)+f_m\,,
  \end{equation}
where $f$ is a general polynomial with Newton polytope $B(a)$ and for each $1\leq i\leq m$, $f_i$ is a general polynomial with Newton
polytope  $B(a_1,\dotsc,a_i{-}1,\dotsc,a_m)$.
We show that $1+ E(a)$ is the number of common zeros in $(\CC^\times)^{n+1}$ of the polynomials in~\eqref{Eq:pyramidPolys}.

Using the first polynomial to eliminate $\lambda$ from the rest shows that solving the
system~\eqref{Eq:pyramidPolys} is equivalent to solving the system
  \begin{equation}\label{Eq:smallerSystem}
    F\ \colon\  f_1+\ell_1(x_1)f\,,\ \dotsc\,,\ f_m + \ell_m(x_m) f\,,
  \end{equation}
 which is in the variables $x_1,\dotsc,x_m$, as $z\mapsto (f(z),z)$ is a bijection between the solutions~$z$ to~\eqref{Eq:smallerSystem}
 and the solutions to~\eqref{Eq:pyramidPolys}.
 We show that the number of common zeroes to~\eqref{Eq:smallerSystem} is~$1+E(a)$, when $f,f_1,\dotsc,f_m$ are
 general given their Newton polytopes.

 Unlike the system~\eqref{Eq:pyramidPolys}, the system $F$ is not general given its support.
 Nevertheless, we will show that no facial system has any solutions.
 Then, by Bernstein's Other Theorem, its number of solutions is the corresponding mixed
 volume, which we now compute.

 Since $B(a_1,\dotsc,a_i{-}1,\dotsc,a_m)\subset B(a)$, the Newton polytope of $f_i+\ell_i(x_i)f$ is $B(a)+[0,\be_i]$.
 Thus the mixed volume we seek is
 \[
   \MV(B(a)+[0,\be_1],\dotsc,B(a)+[0,\be_m])\ =\
   \sum_{\calI\subset\{1,\dotsc,m\}} |\calI|! \prod_{i\in \calI} a_i\ =\ 1 + E(a)\,.
 \]
 To see this, first observe that the second equality is the definition of $E(a)$.
 For the first equality, consider expanding the mixed volume using multilinearity.
 This will have summands indexed by  subsets $\calI$ of $\{1,\dotsc,m\}$ where
 in the summand indexed by $\calI$, we choose $B(a)$ in the positions in $\calI$ and $[0,\be_j]$ when $j\not\in\calI$.
 A repeated application of Lemma~\ref{L:intervals} shows that this summand is
 $\MV(B(a_\calI),\dotsc,B(a_\calI))$, as projecting $a$ from the coordinates $j\not\in\calI$ gives $a_\calI$.
 This term is $|\calI|!\prod_{i\in \calI} a_i$, by the normalization property of mixed volume.\smallskip

We now show that no facial system of~\eqref{Eq:smallerSystem} has any solutions.
Since each Newton polytope is a rectangular parallelepiped $B(a)+[0,\be_j]$, its proper faces are exposed by nonzero vectors
$w\in\{-1,0,1\}^m$, and each exposes a different face.

Let $w\in\{-1,0,1\}^m$ and suppose that $w\neq 0$.
We first consider the face of $B(a)$ exposed by $w$.
This is a rectangular parallelepiped whose $i$th coordinate is
 \[
     0\mbox{ if }\ w_i=1\,,\qquad
     [0,a_i] \mbox{ if }\ w_i=0\,,\qquad\mbox{and}\qquad
     a_i\mbox{ if }\ w_i=-1\,.
 \]
 In the same manner as \eqref{def_optimal_sets}, we define $\defcolor{B(a)_w}:= \{b^*\in B(a) \mid w\cdot b^* = \min_{b\in B(a)} w\cdot b\}$,
 and we similarly define $(B(a)+[0,\be_j])_w$ for each $j=1,\ldots,m$.
 Then,
 \begin{equation}\label{Eq:boxFace}
    B(a)_w\ =\ \sum_{i\colon w_i=1}\{0\} \ +\
               \sum_{i\colon w_i=0}[0,a_i\be_i] \ +\
                  \sum_{i\colon w_i=-1}\{a_i\be_i\} \ ,
 \end{equation}
and we have
 $$(B(a)+[0,\be_j])_w = \begin{cases}
 B(a)_w, &\text{ if } w_j = 1,\\
B(a)_w+ [0,\be_j], &\text{ if } w_j = 0,\\
B(a)_w+ \be_j, &\text{ if } w_j = -1.
 \end{cases}
 $$
As $\ell_j=c_j+x_j$, we also have
 $$\ell_j(x_j)_w = \begin{cases}
c_j, &\text{ if } w_j = 1,\\
\ell_j(x_j), &\text{ if } w_j = 0,\\
x_j, &\text{ if } w_j = -1.
 \end{cases}
 $$
The Newton polytope of $f_i$ has $i$th coordinate the interval
$[0,(a_i{-}1)]$ and for~$j\neq i$ its $j$th coordinate is the interval  $[0,a_j]$.
The Newton polytope of $\ell_i\cdot f$ differs in that its $i$th coordinate is the interval
$[0,(a_i{+}1)]$.
We get
 \begin{equation}\label{Eq:facialPolys}
     (f_i+\ell_if)_w\ =\ \left\{\begin{array}{rcl}
     (f_i)_w+c_i \cdot f_w&\ &\mbox{if } w_i=1\\
     (f_i)_w+\ell_i \cdot f_w&\ &\mbox{if } w_i=0\\
     x_i\cdot f_w&\ &\mbox{if } w_i=-1
     \end{array}\right. ,
 \end{equation}
 and for $f_i$ general $(f_i)_w\neq 0$ when $w_i\neq 1$.

 Let \defcolor{$\alpha$} be the number of coordinates of $w$ equal to $0$, \defcolor{$\beta$} be the number of coordinates equal to $1$ and
 set~$\defcolor{\gamma}:=n-\alpha-\beta$, which is the number of coordinates of $w$ equal to $-1$.
 The faces of $(B(a)+[0,\be_j])_w$ exposed by~$w$ have dimension $\alpha$, by~\eqref{Eq:boxFace}, so the facial system $F_w$
 of~\eqref{Eq:smallerSystem} is effectively in $\alpha$ variables.
 Suppose first that $\gamma>0$.
 Since on $(\CC^\times)^n$ each variable $x_i$ is nonzero, by~\eqref{Eq:facialPolys} the facial system $F_w$ is equivalent to
 \[
    f_w\,,\ \{(f_i)_w\mid w_i\neq -1\}\,.
 \]
 As these are nonzero and general given their support, and there are $\alpha+\beta+1>\alpha$ of them, we see that $F_w$ has no solutions.

 If $\gamma=0$, then $\beta >0$. Consider the subfamily $\widehat{F}$ of systems of the form~\eqref{Eq:smallerSystem} where $f=0$, but the
 $f_i$ remain general.
 Then the facial system $F_w$ is equivalent to the system $\{(f_i)_w\mid w_i\neq -1\}$ of $\alpha+\beta>\alpha$ polynomials
 which are nonzero and general given their support, so that $\widehat{F}_w$ has no solutions.

 As the condition that  $F_w$ has no solutions is an open condition in the space of all systems~\eqref{Eq:pyramidPolys},
 this implies that for a general system~\eqref{Eq:pyramidPolys} with corresponding system   $F$~\eqref{Eq:smallerSystem},
 no facial system $F_w$ has a solution.
This completes the proof of the lemma.  \qed

%
%
%

\bibliographystyle{amsplain} \bibliography{EDD}

\end{document}